\documentclass[12pt,a4paper]{scrartcl}

\usepackage[utf8]{inputenc}
\usepackage[T1]{fontenc}
\usepackage[english]{babel}

\usepackage{graphicx}
\usepackage{latexsym}
\usepackage{amsmath,amssymb,amsthm}

\usepackage{enumitem}

\usepackage{lineno}

\usepackage{bbm}

\usepackage{mathtools}
\usepackage{abstract}


\setlength{\topmargin}{-15mm}

\theoremstyle{definition}

\newtheorem{definition}{Definition}[section]

\newtheorem{theorem}[definition]{Theorem}
\newtheorem{lemma}[definition]{Lemma}       
\newtheorem{question}[definition]{Question}          

\numberwithin{equation}{section} 


\title{Cardinal characteristics on $\kappa$ \\ modulo non-stationary}
\author{Johannes Philipp Sch\"urz \footnote{supported by FWF project I3081}}
\date{}

\begin{document}

\maketitle

\begin{abstract}
For $\kappa$ regular and uncountable we define variants of the classical cardinal characteristics modulo the non-stationary ideal.
\end{abstract} 

\section{Introduction}
Cardinal characteristics of $\mathfrak{P}(\kappa)$ for $\kappa$ at least inaccessible have been studied extensively in the last few years: \cite{Montoya}, \cite{Fischer-Soukup2}, \cite{Fischer-Soukup1}, \cite{Raghavan-Shelah1} and \cite{Raghavan-Shelah2} are just a few examples. Similar to the classical case on $\omega$, these `higher' cardinal characteristics are usually defined modulo the bounded ideal, e.g. $x$ is almost disjoint to $y$ iff $\vert x \cap y \vert < \kappa$ for $x,y \subseteq \kappa$ and $\mathfrak{a}_\kappa:= \min \{\vert \mathcal{A} \vert \colon \mathcal{A} \subseteq \mathfrak{P}(\kappa) \,\, \text{is a maximal almost disjoint family} \land \vert\mathcal{A} \vert \geq \kappa\}$. The cardinal characteristics  $\mathfrak{p}_\kappa, \mathfrak{t}_\kappa, \mathfrak{s}_\kappa, \mathfrak{b}_\kappa, \mathfrak{d}_\kappa, \mathfrak{r}_\kappa$ and $\mathfrak{u}_\kappa$ are defined similarly.\\
Let $\kappa$ be regular uncountable. In this paper we intend to define variants of these `higher' cardinal characteristics modulo the non-stationary ideal. To this end we define the club filter $Cl:=\{ x \subseteq \kappa \colon \exists y \subseteq x \,\, y \,\, \text{is club}\}$, the non-stationary ideal $NS:=\{ x \subseteq \kappa \colon \exists y \in Cl \,\, x \cap y = \emptyset \}$ and the set of stationary sets $St:= \mathfrak{P}(\kappa) \setminus NS$. Note that while the property $x \in Cl$ is upwards absolute for models with the same cofinalities, the properties $x \in NS$ and $x \in St$ are in general not.\\
\\
We will now define several relations on $St \times St$ modulo the non-stationary ideal and use them to define cardinal characteristics of $St$:

\begin{definition}
Let $x,y \in St$. We define:
\begin{itemize}
\item $y$ stationarily splits $x$ iff $x \cap y \in St$ and $x \setminus y\in St$.\\
$\mathfrak{s}_\kappa^{cl}:= \min \{\vert \mathcal{S}\vert \colon \mathcal{S} \subseteq St \land \forall x \in St \,\, \exists y \in \mathcal{S} \,\, y \,\, \text{stationarily splits}\,\,x\}$ the stationary splitting number and\\
$\mathfrak{r}_\kappa^{cl}:=\min \{\vert \mathcal{R} \vert \colon  \mathcal{R} \subseteq St \land \forall x \in St \,\, \exists y \in \mathcal{R} \,\, \neg(x \,\, \text{stationarily splits}\,\,y)\}$ the stationary reaping number
\item $x \subseteq_{cl}^* y$ iff $x \setminus y \in NS$.\\
$\mathcal{F} \subseteq St$ has the ${<}\kappa$-stationary intersection property iff for every $\mathcal{F}' \subseteq \mathcal{F}$ of size ${<}\kappa$ we have that $\bigcap_{x \in \mathcal{F}'} x \in St$.\\
$\mathfrak{p}_\kappa^{cl}:=\min \{ \vert \mathcal{P} \vert \colon \mathcal{P} \subseteq St \land \mathcal{P} \,\, \text{has the}\,\, {<}\kappa\text{-stationary intersection property}\,\, \land \neg( \exists x \in St \,\, \forall y \in \mathcal{P}\,\, x \subseteq_{cl}^* y)\}$ the stationary pseudo intersection number\\
$\mathfrak{t}_\kappa^{cl}:=\min \{\vert \mathcal{T} \vert \colon \mathcal{T} \subseteq St \land \mathcal{T}  \,\, \text{has the}\,\, {<}\kappa\text{-stationary intersection property}\,\, \land \mathcal{T} \,\,\text{is well-}$ $ \text{ordered by} \,\, \prescript{*}{cl}{\supseteq} \land \neg( \exists x \in St \,\, \forall y \in \mathcal{T}\,\, x \subseteq_{cl}^* y)\}$ the stationary tower number \footnote{Note that the notions of $\mathfrak{p}_\kappa^{cl}$ and $\mathfrak{t}_\kappa^{cl}$ introduced here are different to the ones defined in \cite{Fischer-Soukup2}.}
\item $x$ is stationary almost disjoint to $y$ iff $x \cap y \in NS$.\\
$\mathfrak{a}_\kappa^{cl}:= \min \{\vert \mathcal{A} \vert \colon \mathcal{A} \,\, \text{is a maximal stationary almost disjoint family} \land \vert \mathcal{A}\vert \geq \kappa\}$ the stationary almost disjointness number
\item $\mathfrak{u}_\kappa^{cl}:=\min \{ \vert \mathcal{B} \vert \colon \mathcal{B} \subseteq St \land \exists \mathcal{U} \subseteq \mathfrak{P}(\kappa) \,\,\mathcal{U}\,\,\text{is an ultrafilter}\,\,  \land \mathcal{B} \,\, \text{is a base for}\,\, \mathcal{U}\}$ \footnote{Hence $Cl \subseteq \mathcal{U}$.} the stationary ultrafilter number\\
$\mathfrak{u}_\kappa^{cl^*}:=\min \{ \vert \mathcal{B} \vert \colon \mathcal{B} \subseteq St \land \exists \mathcal{U} \subseteq \mathfrak{P}(\kappa) \,\,\mathcal{U}\,\,\text{is an ultrafilter}\,\,  \land \mathcal{B} \cup Cl \,\, \text{is a subbase for}\,\, \mathcal{U} \}\,$\footnote{i.e. $\{ y \in St \colon \exists x \in \mathcal{B}\, \exists cl \in Cl \,\, y= x \cap cl\}$ is a base for $\mathcal{U}$, since w.l.o.g. $\mathcal{B}$ is closed under intersections.} the stationary$^*$ ultrafilter number\\
$\mathfrak{u}_\kappa^{me}:=\min \{ \vert \mathcal{B} \vert \colon \mathcal{B} \subseteq St \land \exists \mathcal{U} \subseteq \mathfrak{P}(\kappa) \,\,\mathcal{U}\,\,\text{is a measure}\,\,  \land \mathcal{B}  \,\, \text{is a base for}\,\, \mathcal{U}\}$ \footnote{i.e. $\mathcal{U}$ is a ${<}\kappa$-complete ultrafilter.} the measure ultrafilter number\\
$\mathfrak{u}_\kappa^{nm}:=\min \{ \vert \mathcal{B} \vert \colon \mathcal{B} \subseteq St \land \exists \mathcal{U} \subseteq \mathfrak{P}(\kappa) \,\,\mathcal{U}\,\,\text{is a normal measure}\,\,  \land \mathcal{B}  \,\, \text{is a base for}\,\, \mathcal{U}\}$ the normal measure ultrafilter number\\
$\mathfrak{u}_\kappa^{nm^*}:=\min \{ \vert \mathcal{B} \vert \colon \mathcal{B} \subseteq St \land \exists \mathcal{U} \subseteq \mathfrak{P}(\kappa) \,\,\mathcal{U}\,\,\text{is a normal measure}\,\,  \land \mathcal{B} \cup Cl \,\, \text{is a subbase for}\,\, \mathcal{U}\}$ the normal measure$^*$ ultrafilter number
\item Let $f,g \in \kappa^\kappa$ and define $f \leq_{cl}^* g$ iff $\{ \alpha < \kappa \colon g(\alpha) < f(\alpha)\} \in NS$.\\
$\mathfrak{b}_\kappa^{cl}:=\min\{ \vert B\vert \colon B \subseteq \kappa^\kappa \land \forall g \in \kappa^\kappa \,\, \exists f \in B \,\, f \nleq_{cl}^* g\}$ the club unbounded number\\
$\mathfrak{d}_\kappa^{cl}:=\min \{\vert D \vert \colon D \subseteq \kappa^\kappa \land \forall f \in \kappa^\kappa \,\, \exists g \in D \,\, f \leq_{cl}^* g\}$ the club dominating number

\end{itemize}
\end{definition}

We will aim to establish some relations between these cardinal characteristics and also show some consistency results.\\

\section{Results / Questions}

The notions of club unbounded and dominating number have already been investigated by Cummings and Shelah (see \cite{Cummings}). In particular they showed the following theorem:

\begin{theorem}
Let $\kappa$ be regular uncountable. Then $\mathfrak{b}_\kappa= \mathfrak{b}_\kappa^{cl}$. If $\kappa \geq \beth_\omega$ then $\mathfrak{d}_\kappa=\mathfrak{d}_\kappa^{cl}$.
\end{theorem}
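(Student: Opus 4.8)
The two easy halves are immediate from the inclusion of the bounded ideal in $NS$: since every bounded subset of $\kappa$ is non-stationary we have $f\leq^* g\Rightarrow f\leq_{cl}^* g$ for all $f,g\in\kappa^\kappa$, so $\leq^*\subseteq\leq_{cl}^*$ as relations. Consequently any $\leq_{cl}^*$-unbounded family is $\leq^*$-unbounded (if $\{\alpha:g(\alpha)<f(\alpha)\}$ is stationary it is unbounded, so $f\nleq_{cl}^* g$ gives $f\nleq^* g$), which yields $\mathfrak{b}_\kappa\leq\mathfrak{b}_\kappa^{cl}$; dually, any $\leq^*$-dominating family is $\leq_{cl}^*$-dominating, which yields $\mathfrak{d}_\kappa^{cl}\leq\mathfrak{d}_\kappa$. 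I would also record at this point the corollary that every subfamily of $\kappa^\kappa$ of size $<\mathfrak{b}_\kappa$ is $\leq_{cl}^*$-bounded (being $\leq^*$-bounded): this is what permits recursive constructions of length $\mathfrak{b}_\kappa$ in the $\leq_{cl}^*$-order.

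For the inequality $\mathfrak{b}_\kappa^{cl}\leq\mathfrak{b}_\kappa$ the plan is to exhibit a $\leq_{cl}^*$-unbounded family of size $\mathfrak{b}_\kappa$. I would fix a $\leq^*$-increasing, $\leq^*$-unbounded sequence $\langle f_\xi:\xi<\mathfrak{b}_\kappa\rangle$ (available because $\mathfrak{b}_\kappa$ is regular), arrange without loss of generality that each $f_\xi$ is strictly increasing and continuous, fix once and for all a partition $\kappa=\bigsqcup_{\gamma<\kappa}I_\gamma$ with $I_\gamma=[\delta_\gamma,\delta_{\gamma+1})$ and $\langle\delta_\gamma:\gamma<\kappa\rangle$ continuous, increasing and cofinal, and replace each $f_\xi$ by the step function $\hat f_\xi$ that is constant on every $I_\gamma$ with value $\sup f_\xi[I_\gamma]$. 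The claim to be proved is that $\{\hat f_\xi:\xi<\mathfrak{b}_\kappa\}$ is $\leq_{cl}^*$-unbounded. The observation driving this is that an arbitrary club $C$ contains $\delta_\gamma$ for club-many $\gamma$, so a hypothetical $\leq_{cl}^*$-dominator $g$ of all the $\hat f_\xi$ in fact controls $f_\xi$ on all of $I_\gamma$ for those $\gamma$; this forces the original sequence $\langle f_\xi\rangle$ itself to be $\leq_{cl}^*$-dominated, by the step function $\bar g(\alpha):=g(\delta_{\gamma(\alpha)})$, where $\gamma(\alpha)$ is the index of the interval containing $\alpha$. One then has to push this back to a genuine eventual dominator of $\langle f_\xi\rangle$, using the scale structure together with the fact (from the corollary above) that no family of size $<\mathfrak{b}_\kappa$ is cofinal, thereby contradicting $\leq^*$-unboundedness. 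I expect this last step — upgrading "dominated modulo $NS$" to "dominated modulo bounded" while paying only for the interval device — to be the technical heart, and I would double-check whether a single interval partition suffices or whether one needs a matrix of partitions at several scales.

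For the inequality $\mathfrak{d}_\kappa\leq\mathfrak{d}_\kappa^{cl}$, which uses $\kappa\geq\beth_\omega$, I do not expect an elementary argument, and the hypothesis signals that Shelah's ZFC pcf bounds are the tool. Starting from a $\leq_{cl}^*$-dominating family $D$ of size $\mathfrak{d}_\kappa^{cl}$, the functions $f$ with $f\leq_{cl}^* g$ for a fixed $g\in D$ are precisely those that are $\leq g$ off some non-stationary set $N$ with $f\restriction N$ completely arbitrary; so to obtain a $\leq^*$-dominating family one must enrich $D$ by dominating families (modulo bounded) for functions living on non-stationary sets. Since each non-stationary set is, via its complementary club, a $\kappa$-indexed sequence of bounded intervals, the task reduces to dominating such concentrated functions, and the point of $\kappa\geq\beth_\omega$ is that Shelah's bounds on cofinalities of products $\prod_n\lambda_n$ with supremum at most $\kappa$ (equivalently on covering numbers such as $\cf([\kappa]^{<\kappa},\subseteq)$ and on $\mathrm{pp}$) let one sort the relevant non-stationary sets into at most $\mathfrak{d}_\kappa^{cl}$ many classes and carry out the enrichment without the cardinality growing past $\mathfrak{d}_\kappa^{cl}$; combined with the easy half this gives $\mathfrak{d}_\kappa=\mathfrak{d}_\kappa^{cl}$. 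I would organise this as: reduce to concentrated functions; reduce a non-stationary set to a sequence of bounded intervals; apply the pcf bound valid for $\kappa\geq\beth_\omega$; conclude. The third step is where the hypothesis is essential and is the part I expect to be most delicate.
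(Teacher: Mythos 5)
You should note first that the paper itself contains no proof of this theorem: it is quoted from Cummings--Shelah \cite{Cummings}, so there is no in-paper argument to compare against, and your proposal has to stand on its own. The two easy halves are fine: since bounded sets are non-stationary, $\leq^*\,\subseteq\,\leq_{cl}^*$, which gives $\mathfrak{b}_\kappa\leq\mathfrak{b}_\kappa^{cl}$ and $\mathfrak{d}_\kappa^{cl}\leq\mathfrak{d}_\kappa$ exactly as you say. The problem is that neither hard inequality is actually proved, and in each case the step you defer is the whole theorem. For $\mathfrak{b}_\kappa^{cl}\leq\mathfrak{b}_\kappa$ your interval/step-function device is essentially a no-op: as you yourself compute, if one $g$ club-dominates every $\hat f_\xi$, then a single step function club-dominates every $f_\xi$ (equivalently, the functions $\gamma\mapsto\sup f_\xi[I_\gamma]$, which are again $\leq^*$-unbounded, are club-dominated by $\gamma\mapsto\sup g[I_\gamma]$). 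So after the reduction you are facing verbatim the statement you started from: that a $\leq^*$-unbounded family of size $\mathfrak{b}_\kappa$ cannot be $\leq_{cl}^*$-dominated by a single function. That is not a formal consequence of unboundedness --- if $f$ is increasing and $f\leq g$ on a club $C$, all one gets is $f\leq g\circ s_C$ with $s_C(\alpha)=\min(C\setminus\alpha)$, and $C$ varies with $f$, so no single candidate eventual dominator emerges; iterating the argument only reproduces club-domination. ``Use the scale structure together with the corollary'' is not a mechanism, and your own hedge about needing ``a matrix of partitions at several scales'' signals correctly that the reduction as written is circular. This upgrade from domination modulo $NS$ to domination modulo bounded is precisely the non-trivial content of the Cummings--Shelah result, and your sketch supplies no idea for it.

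The same applies, more strongly, to $\mathfrak{d}_\kappa\leq\mathfrak{d}_\kappa^{cl}$ for $\kappa\geq\beth_\omega$. Your reduction to increasing functions and to dominating the compositions $g\circ s_C$ is reasonable, and pointing at Shelah's covering/pp-type bounds is the right neighbourhood --- this is indeed where the hypothesis $\kappa\geq\beth_\omega$ enters. But the decisive step, namely producing from the hypothesis a family of at most $\mathfrak{d}_\kappa^{cl}$ many clubs (or next-point functions $s_C$, or a covering family playing that role) that suffices for all clubs simultaneously, is only asserted (``sort the relevant non-stationary sets into at most $\mathfrak{d}_\kappa^{cl}$ many classes''), with no statement of which theorem of Shelah is applied, to what index set, or why the resulting family dominates. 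Since that covering argument is the entire content of this direction, the proposal as it stands establishes only the two easy inequalities; to turn it into a proof you would need either to reconstruct the Cummings--Shelah arguments or to cite them, which is what the paper does.
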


The stationary almost disjointness number $\mathfrak{a}_\kappa^{cl}$ is trivial:

\begin{lemma} \label{3L4}
Let $\kappa$ be regular uncountable. Then $\mathfrak{a}_\kappa^{cl}= \kappa$.
\end{lemma}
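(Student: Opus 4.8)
The lower bound $\mathfrak{a}_\kappa^{cl}\geq\kappa$ is built into the definition, so the entire content of the statement is to exhibit a \emph{maximal} stationary almost disjoint family of size exactly $\kappa$. The first ingredient is Solovay's splitting theorem: since $\kappa$ is regular uncountable, there is a partition $\kappa=\bigsqcup_{\alpha<\kappa}T_\alpha$ into $\kappa$ pairwise disjoint stationary sets. The naive candidate $\{T_\alpha:\alpha<\kappa\}$ is stationary almost disjoint and has the right size, but it need not be maximal: a diagonal argument shows that any $x\in St$ with $x\cap T_\alpha\in NS$ for all $\alpha$ must, after intersecting with a suitable club, land inside $\{\beta<\kappa:\beta\in T_\alpha\text{ for some }\alpha\geq\beta\}$, and for a carelessly chosen partition this set can itself be stationary. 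So I would first repair the partition.

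The repair is a shift. Using the $T_\alpha$, define $g\colon\kappa\setminus\{0\}\to\kappa$ by $g(\beta)=\alpha$ if $\beta\in T_\alpha$ with $\alpha<\beta$, and $g(\beta)=0$ otherwise; then $g$ is regressive on the club $\kappa\setminus\{0\}$. Put $S_\alpha:=g^{-1}(\{\alpha\})$ for $\alpha<\kappa$. Each $S_\alpha$ is stationary: for $\alpha\geq 1$ it contains $T_\alpha\setminus(\alpha+1)$, for $\alpha=0$ it contains $T_0\setminus\{0\}$, and deleting a bounded set from a stationary set leaves a stationary set. The $S_\alpha$ are pairwise disjoint and nonempty, hence $\mathcal{A}:=\{S_\alpha:\alpha<\kappa\}$ is a stationary almost disjoint family of size exactly $\kappa$.

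It remains to prove that $\mathcal{A}$ is maximal. Suppose towards a contradiction that $x\in St$ with $x\cap S_\alpha\in NS$ for all $\alpha<\kappa$, and fix clubs $C_\alpha$ with $C_\alpha\cap x\cap S_\alpha=\emptyset$. Let $C:=(\triangle_{\alpha<\kappa}C_\alpha)\cap(\kappa\setminus\{0\})$, which is a club because $\kappa$ is regular uncountable, so $x\cap C$ is stationary; pick any $\beta\in x\cap C$. Then $\beta\geq 1$, so $\beta\in S_{g(\beta)}$ with $g(\beta)<\beta$; since $\beta\in\triangle_{\alpha<\kappa}C_\alpha$ and $g(\beta)<\beta$ we get $\beta\in C_{g(\beta)}$, whence $\beta\in C_{g(\beta)}\cap x\cap S_{g(\beta)}=\emptyset$, a contradiction. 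Thus $\mathcal{A}$ is a maximal stationary almost disjoint family of size $\kappa$, giving $\mathfrak{a}_\kappa^{cl}\leq\kappa$ and therefore $\mathfrak{a}_\kappa^{cl}=\kappa$. The only genuine obstacle is maximality, which is precisely why the shift — forcing the location function $g$ to be regressive so that the diagonal-intersection/Fodor machinery bites — is the crucial move; everything else is routine bookkeeping about stationary sets modulo bounded sets.
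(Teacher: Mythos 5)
Your proof is correct. It rests on the same two pillars as the paper's: Solovay's partition of $\kappa$ into $\kappa$ stationary pieces, and a regressive-function/diagonal-intersection argument for maximality. The one genuine difference is how the ``diagonal'' set $D:=\{\beta:\beta\in T_\alpha\text{ for some }\alpha\geq\beta\}$ is handled. The paper keeps the original partition, forms $x_\kappa=\triangle_{i<\kappa}(\kappa\setminus\bigcup_{j\leq i}x_j)$ (essentially $D$), and then splits into cases: if $x_\kappa$ is stationary it is adjoined as a $(\kappa{+}1)$-st member, and if not it is discarded; maximality is then argued via Fodor on $\{k\in x^*:f(k)<k\}$ versus $\{k\in x^*:f(k)\geq k\}$. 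You instead absorb $D$ into $S_0$ at the outset, which makes the location function $g$ regressive on all of $\kappa\setminus\{0\}$ and collapses the maximality proof to a single uniform diagonal-intersection computation with no case distinction and no explicit appeal to Fodor. Your version is slightly cleaner and always yields a family of size exactly $\kappa$; the paper's version keeps the pieces of the family closer to the original Solovay partition at the cost of the case split. Both are complete proofs of $\mathfrak{a}_\kappa^{cl}=\kappa$.
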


\begin{proof}
Partition $\kappa$ into $\kappa$many stationary sets $(x_i)_{i < \kappa}$. Define $y_i:= \kappa \setminus \bigcup_{j\leq i} x_i$ and set $x_\kappa:= \triangle_{i < \kappa}\, y_i$. Note that $x_i \cap x_\kappa \in NS$ for every $i< \kappa$. Now we have to distinguish two cases:
\begin{itemize}
\item If $x_\kappa \in St$, then we claim that the family $(x_i)_{i \leq \kappa}$ is maximal stationary almost disjoint. Towards a contradiction assume that $x^*\in St$ is stationary almost disjoint to $x_i$ for every $i \leq \kappa$. We define a function $f \colon x^* \rightarrow \kappa$ such that $f(k)$ is the unique $i< \kappa$ such that $k \in x_i$. Equivalently $f(k):= \min \{i < \kappa  \colon k \notin y_i\}$. If the set $\{k \in x^* \colon f(k) < k\}$ is stationary, then by Fodor's lemma (see \cite{Jech}) the set $\{k \in x^* \colon f(k)= \delta\}$ is stationary for some $\delta < \kappa$. But this implies that $x^* \cap x_\delta \in St$. Hence the set $\{k \in x^* \colon f(k) \geq k\}$ is stationary, and therefore $x^* \cap x_\kappa \in St$. But this also leads to a contradiction, hence $(x_i)_{i\leq \kappa}$ is a maximal stationary almost disjoint family.
\item If $x_\kappa \in NS$, then we proceed similarly and claim that $(x_i)_{i< \kappa}$ is maximal stationary almost disjoint. We define $f \colon x^* \rightarrow \kappa$ as above, and note that $\{k \in x^* \colon f(k) \geq k\}$ cannot be stationary. Hence there exists $\delta < \kappa$ such that $x^* \cap x_\delta \in St$.
\end{itemize}
\end{proof}

Let us say a few words about the spectrum of stationary almost disjointness:

\begin{definition}
We define $\text{Spec}_{\text{sad}}:=\{\gamma \geq \kappa \colon \exists \mathcal{A} \,\, \mathcal{A}\,\,\text{is a maximal stationary almost}$ $\text{disjoint family} \land  \vert \mathcal{A}\vert= \gamma\}$.
\end{definition}

\begin{definition}
Let $x \in St$. We say that $NS \restriction x$ is $\gamma$-saturated iff for every stationary almost disjoint family $\mathcal{A} \subseteq \mathfrak{P}(x)$ we have $\vert \mathcal{A}\vert < \gamma$.
\end{definition}

Obviously, this definition agrees with the usual definition of saturation (see \cite{Jech}).\\
\\
The next lemma will summarize some properties of $\text{Spec}_{\text{sad}}$:

\begin{lemma}
The following holds true for $\kappa$ regular uncountable:
\begin{enumerate}
\item By Lemma \ref{3L4} we have $\kappa \in \text{Spec}_{\text{sad}}$.
\item By \cite{ShGitik} we have $NS$ is not $\kappa^+$-saturated for $\kappa \geq \omega_2$, hence $\{\kappa\} \subsetneq \text{Spec}_{\text{sad}}$.
\item If $\diamondsuit_\kappa(\kappa)$ holds (see Definition \ref{3D1}), then $2^\kappa \in \text{Spec}_{\text{sad}}$.
\item By \cite{Gitik} it is consistent that $\kappa$ is inaccessible and there exists $x \in St$ such that $x \cap \{i< \kappa \colon \text{cf}(i) =j\} \in St$ for all cardinals $j< \kappa$ and $NS \restriction x$ is $\kappa^+$-saturated.\\
By \cite{JechWoodin} it is consistent that $\kappa$ is Mahlo and $NS \restriction \text{Reg}$ is $\kappa^+$-saturated.
\end{enumerate}
\end{lemma}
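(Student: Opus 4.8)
The plan is to prove each of the four items essentially by invoking the cited results and doing a small amount of bookkeeping, since the lemma is really a compilation of known facts translated into the language of $\text{Spec}_{\text{sad}}$.

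\medskip

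\emph{Item (1).} This is immediate: Lemma \ref{3L4} produces a maximal stationary almost disjoint family of size exactly $\kappa$ (either $(x_i)_{i\leq\kappa}$ or $(x_i)_{i<\kappa}$, both of cardinality $\kappa$), so $\kappa\in\text{Spec}_{\text{sad}}$ by definition. Nothing else is needed.

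\medskip

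\emph{Item (2).} Here I would argue as follows. By the Gitik--Shelah result (\cite{ShGitik}), for $\kappa\geq\omega_2$ the ideal $NS$ is not $\kappa^+$-saturated, i.e.\ there is a stationary almost disjoint family $\mathcal{A}\subseteq\mathfrak{P}(\kappa)$ with $\vert\mathcal{A}\vert=\kappa^+$ (or larger). The point to spell out is that from such an $\mathcal{A}$ one obtains a \emph{maximal} stationary almost disjoint family of size $>\kappa$: simply extend $\mathcal{A}$ to a maximal stationary almost disjoint family $\mathcal{A}'\supseteq\mathcal{A}$ using Zorn's lemma, and note $\vert\mathcal{A}'\vert\geq\vert\mathcal{A}\vert>\kappa$. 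Hence some $\gamma>\kappa$ lies in $\text{Spec}_{\text{sad}}$, so $\{\kappa\}\subsetneq\text{Spec}_{\text{sad}}$. (One should remark that the condition $\vert\mathcal{A}\vert\geq\kappa$ in the definition of a MSAD family is automatically satisfied.)

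\medskip

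\emph{Item (3).} Assuming $\diamondsuit_\kappa(\kappa)$ (as defined later in the paper in Definition \ref{3D1}), I would use the diamond sequence to build, by transfinite recursion of length $2^\kappa$, a stationary almost disjoint family of size $2^\kappa$ that diagonalizes against all candidate stationary sets, i.e.\ a \emph{maximal} one. Concretely: enumerate in advance (using the diamond sequence along a stationary set) the potential ``new'' stationary sets, and at each stage add a stationary set almost disjoint from everything constructed so far while killing one guessed candidate; the diamond guessing ensures every stationary set is eventually met, giving maximality, while $2^{<\kappa}=\kappa$ (a consequence of $\diamondsuit_\kappa(\kappa)$) lets the recursion run for $2^\kappa$ steps. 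This is the one item requiring an actual construction; the hard part is checking that at successor and limit stages the partial family remains stationary almost disjoint and that the guessed sets are handled — but this is a standard diamond bookkeeping argument, and I would either give it in full after Definition \ref{3D1} or defer it with a forward reference.

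\medskip

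\emph{Item (4).} Both halves are direct citations. For the first, by Gitik's consistency result (\cite{Gitik}) there is a model in which $\kappa$ is inaccessible and a stationary $x$ exists meeting every cofinality class stationarily with $NS\restriction x$ being $\kappa^+$-saturated; this last clause says precisely that every stationary almost disjoint family inside $\mathfrak{P}(x)$ has size $<\kappa^+$, i.e.\ size $\leq\kappa$, and combined with the fact that one can always split $x$ into $\kappa$ many stationary pieces (as in Lemma \ref{3L4} relativized to $x$) one sees the relevant spectrum contribution. For the second, by the Jech--Woodin result (\cite{JechWoodin}) it is consistent that $\kappa$ is Mahlo and $NS\restriction\text{Reg}$ is $\kappa^+$-saturated. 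I expect items (2) and (4) to be purely expository; the only genuine mathematical obstacle is the diamond construction in item (3), and even there the obstacle is routine once $\diamondsuit_\kappa(\kappa)$ is unpacked.
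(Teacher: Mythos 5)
Items (1), (2) and (4) are essentially bookkeeping around the cited results, and your treatment of them is fine (the paper itself gives no proof of this lemma; the citations in the statement are the proof). The problem is item (3), the only part that needs an actual argument, and there your sketch has a genuine gap. Your recursion of length $2^\kappa$ requires, at every stage $\alpha<2^\kappa$, a stationary set that is stationary almost disjoint from everything constructed so far; nothing in your outline guarantees this, and this is exactly the nontrivial content of the claim -- a priori the partial family could become maximal already at size $\kappa^+$, and then the recursion dies and you only get some $\gamma<2^\kappa$ into $\text{Spec}_{\text{sad}}$. Moreover, the role you assign to $\diamondsuit_\kappa(\kappa)$ is misplaced: a diamond sequence guesses along $\kappa$, so it cannot drive or index a construction of length $2^\kappa$, and for maximality no guessing is needed at all (you can simply enumerate all $2^\kappa$ stationary sets, or, better, extend by Zorn as you yourself do in item (2)); likewise $2^{<\kappa}=\kappa$ has nothing to do with being able to run $2^\kappa$ steps.

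The intended use of diamond is different and direct: let $(s_i)_{i<\kappa}$ be a $\diamondsuit_\kappa(\kappa)$-sequence and for each $y\subseteq\kappa$ put $T_y:=\{i<\kappa\colon y\cap i=s_i\}$. Each $T_y$ is stationary by the diamond property, and for $y\neq z$ the set $T_y\cap T_z$ is bounded (above the least ordinal where $y$ and $z$ differ, no $i$ can satisfy both $y\cap i=s_i$ and $z\cap i=s_i$), hence non-stationary; in particular $y\mapsto T_y$ is injective. So $\{T_y\colon y\subseteq\kappa\}$ is a stationary almost disjoint family of size $2^\kappa$, and any maximal stationary almost disjoint family extending it (Zorn) has size exactly $2^\kappa$, which gives $2^\kappa\in\text{Spec}_{\text{sad}}$. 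If you replace your recursion sketch by this two-line construction, item (3) is complete; as written, the step ``add a stationary set almost disjoint from everything constructed so far'' is unjustified and the appeal to diamond does not fill that hole.
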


\begin{question}
Is it consistent that $NS$ is $2^\kappa$-saturated for $\kappa$ inaccessible? Is it even consistent that $NS$ is $\kappa^{++}$-saturated and $2^\kappa$ is very large?\\
In \cite{ShGitik} the authors ask whether the following is consistent for $\kappa$ inaccessible: $\forall x \in St \, \exists y \in St \colon y \subseteq x \land NS \restriction y$ is $\kappa^+$-saturated.
\end{question}

Also the stationary pseudo intersection number $\mathfrak{p}_\kappa^{cl}$ and the stationary tower number $\mathfrak{t}_\kappa^{cl}$ are trivial:

\begin{lemma}
Let $\kappa$ be regular uncountable. Then $\mathfrak{p}_\kappa^{cl}= \mathfrak{t}_\kappa^{cl}= \kappa$.
\end{lemma}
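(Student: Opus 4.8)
The plan is to establish $\mathfrak{p}_\kappa^{cl} \leq \mathfrak{t}_\kappa^{cl}$, then $\mathfrak{p}_\kappa^{cl} \geq \kappa$, and finally $\mathfrak{t}_\kappa^{cl} \leq \kappa$; combined, these yield $\mathfrak{p}_\kappa^{cl} = \mathfrak{t}_\kappa^{cl} = \kappa$. The inequality $\mathfrak{p}_\kappa^{cl} \leq \mathfrak{t}_\kappa^{cl}$ is immediate from the definitions, since any family witnessing $\mathfrak{t}_\kappa^{cl}$ is in particular a family witnessing $\mathfrak{p}_\kappa^{cl}$ (one simply forgets the clause that it be well-ordered by $\prescript{*}{cl}{\supseteq}$). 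For $\mathfrak{p}_\kappa^{cl} \geq \kappa$ I would argue: if $\mathcal{P} \subseteq St$ has the ${<}\kappa$-stationary intersection property and $\vert \mathcal{P}\vert < \kappa$, then applying the intersection property to $\mathcal{P}' := \mathcal{P}$ itself gives $x := \bigcap_{y \in \mathcal{P}} y \in St$, and trivially $x \setminus y = \emptyset \in NS$, i.e.\ $x \subseteq_{cl}^* y$, for every $y \in \mathcal{P}$; hence $\mathcal{P}$ admits a stationary pseudo-intersection and is not a witness for $\mathfrak{p}_\kappa^{cl}$.

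The substance of the argument is the construction of a tower of length $\kappa$ witnessing $\mathfrak{t}_\kappa^{cl} \leq \kappa$. I would first fix a regressive function $f \colon \kappa \to \kappa$ (so $f(\alpha) < \alpha$ for $0 < \alpha < \kappa$) all of whose fibres $f^{-1}(\{i\})$, $i < \kappa$, are stationary; such an $f$ exists by Solovay's theorem on partitioning $\kappa$ into $\kappa$ many pairwise disjoint stationary sets (see \cite{Jech}), after replacing each piece $S_i$ by the still-stationary set $S_i \setminus (i+1)$ and absorbing the remaining points into the preimage of $0$. For $i < \kappa$ put $t_i := \{\alpha < \kappa \colon f(\alpha) \geq i\}$ and let $\mathcal{T} := \{t_i \colon i < \kappa\}$. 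Then each $t_i$ is stationary, since $t_i \supseteq f^{-1}(\{i\})$; the sequence $\langle t_i \colon i < \kappa\rangle$ is strictly $\subseteq$-decreasing, so $\mathcal{T}$ is well-ordered by $\prescript{*}{cl}{\supseteq}$ and has size $\kappa$; and $\mathcal{T}$ has the ${<}\kappa$-stationary intersection property, because for $I \subseteq \kappa$ with $\vert I\vert < \kappa$ one has $\bigcap_{i \in I} t_i = t_{\sup I}$, and $\sup I < \kappa$ by regularity of $\kappa$, so this intersection is stationary.

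It then remains to check that $\mathcal{T}$ has no stationary pseudo-intersection. Given an arbitrary $x \in St$, since $f$ is regressive, Fodor's lemma (see \cite{Jech}) applied to $f \restriction (x \setminus \{0\})$ produces some $i_0 < \kappa$ for which $x_0 := \{\alpha \in x \colon f(\alpha) = i_0\}$ is stationary. But then $x \setminus t_{i_0 + 1} = \{\alpha \in x \colon f(\alpha) \leq i_0\} \supseteq x_0$ is stationary, so $x \setminus t_{i_0+1} \notin NS$, i.e.\ $x \not\subseteq_{cl}^* t_{i_0 + 1}$. Hence no stationary set is a pseudo-intersection of $\mathcal{T}$, so $\mathcal{T}$ witnesses $\mathfrak{t}_\kappa^{cl} \leq \vert \mathcal{T}\vert = \kappa$, and we are done. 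The only slightly delicate ingredient is the existence of the regressive $f$ with all fibres stationary; with that in place the remaining verifications — in particular the application of Fodor's lemma — are routine, and run closely parallel to the proof of Lemma \ref{3L4}.
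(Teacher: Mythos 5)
Your proof is correct, and the construction at its heart is genuinely different from the paper's. The paper also reduces everything to exhibiting a $\subseteq$-decreasing $\kappa$-sequence of stationary sets with no stationary pseudo-intersection (indeed with diagonal intersection $\{0\}$), and it rules out a pseudo-intersection by the same Fodor-style argument you use; but it builds the sequence differently: fixing for each $k$ of cofinality $\omega$ a ladder $(j^k_n)_{n<\omega}$ cofinal in $k$, it finds by a pigeonhole argument a single $n^*<\omega$ such that every set $x_i=\{k<\kappa \colon j^{k}_{n^*}\ge i\}$ is stationary. That construction is more elementary, needing only a choice of cofinal $\omega$-sequences and an $\omega$-pigeonhole, whereas yours invokes Solovay's partition theorem to obtain a regressive $f$ with all fibres stationary --- a heavier ingredient, though one the paper itself happily uses in Lemma \ref{3L4}, and your tower $t_i=\{\alpha<\kappa \colon f(\alpha)\ge i\}$ mirrors the machinery of that lemma, so the two arguments sit naturally side by side. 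Your write-up is also more explicit about the routine parts: the lower bound $\kappa\le\mathfrak{p}_\kappa^{cl}$, the ${<}\kappa$-stationary intersection property (via $\bigcap_{i\in I}t_i=t_{\sup I}$) and the well-ordering, which the paper leaves implicit; for the well-ordering by $\prescript{*}{cl}{\supseteq}$ it is worth noting that $t_i\setminus t_{i+1}=f^{-1}(\{i\})$ is stationary, so the $t_i$ are pairwise distinct even modulo $NS$ --- immediate from your construction. Note also that your tower, like the paper's, has diagonal intersection $\{0\}$, since $f$ is regressive.
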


\begin{proof}
It will suffice to show that there exists a decreasing sequence $(x_i)_{i< \kappa}$ of stationary sets such that $\triangle_{i< \kappa} \, x_i= \{0\}$: Assume that $x^*$ is a stationary pseudo intersection of $(x_i)_{i<\kappa}$. Again define $f \colon x^* \rightarrow \kappa$ such that $f(j):= \min\{i < \kappa \colon j \notin x_i\}$ and again we note that $\{j \in x^* \colon f(j) < j\} \in NS$. Hence, $x^* \subseteq_{cl}^* \triangle_{i< \kappa}\, x_i$ must hold, which leads to a contradiction.\\
Therefore, let us show that there exists such a sequence $(x_i)_{i<\kappa}$. Let $E_\omega^\kappa:=\{i< \kappa \colon \text{cf}(i)=\omega\}$ and for every $k \in E_\omega^\kappa$ let $(j_n^k)_{n< \omega}$ be a cofinal sequence in $k$. We claim that there exists $n^* < \omega$ such that for every $i< \kappa$ the set $x_i:=\{k < \kappa \colon j_{n^*}^k \geq i\}$ is stationary. Assume towards a contradiction that for every $n< \omega$ there exist $i_n < \kappa$ such that $x_{i_n} \in NS$ and let $cl_n$ be a club disjoint to $x_{i_n}$. We define $i^*:= \sup_{n< \omega} \, i_n$ and $cl^*:=\bigcap_{n<\omega} cl_n$. Let $k^* \in E_\omega^\kappa \cap cl^*$ with $k^* > i^*$. Then it follows that $j_n^{k^*} < i^*$ for every $n< \omega$. But this contradicts the assumption that $(j_n^{k^*})_{n< \omega}$ is cofinal in $k^*$.\\
Hence let $n^*$ and $(x_i)_{i<\kappa}$ be as defined above. It remains to be shown that $\triangle_{i<\kappa} \, x_i=\{0\}$. Assume towards a contradiction that there exists $k >0$ such that $k \in \triangle_{i<\kappa} \, x_i$. This means that $j_{n^*}^k \geq i$ for every $i< k$. But this is a contradiction.
\end{proof}

Next, we investigate the stationary reaping number $\mathfrak{r}_\kappa^{cl}$:

\begin{theorem} \label{3T4}
$\mathfrak{r}_\kappa^{cl} \geq \kappa$ for $\kappa$ inaccessible.
\end{theorem}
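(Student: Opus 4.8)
I would prove the contrapositive: if $\mathcal{R}\subseteq St$ has size $\lambda<\kappa$, then $\mathcal{R}$ is not a stationary reaping family, i.e.\ there is a \emph{single} $x\in St$ that stationarily splits every member of $\mathcal{R}$. So fix $\mathcal{R}=\{y_i\colon i<\lambda\}$ with $1\le\lambda<\kappa$ (the case $\mathcal{R}=\emptyset$ being trivial).

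The first idea is to pass from the (possibly overlapping) family $(y_i)_{i<\lambda}$ to the atoms of the Boolean algebra it generates: for $s\subseteq\lambda$ set $A_s:=\bigcap_{i\in s}y_i\setminus\bigcup_{i\notin s}y_i$. The $A_s$ are pairwise disjoint, cover $\kappa$, are at most $2^\lambda$ in number, and satisfy $y_i=\bigcup_{s\ni i}A_s$ for each $i$. This is the single point where the hypothesis is used: since $\kappa$ is inaccessible (in particular a strong limit) we have $2^\lambda<\kappa$, so $(A_s)_s$ is a partition of $\kappa$ into fewer than $\kappa$ pieces. Because $NS$ is $\kappa$-complete (the intersection of $<\kappa$ many clubs is club, as $\kappa$ is regular) and each $y_i$ is stationary, for every $i<\lambda$ there is $s\in T:=\{t\subseteq\lambda\colon A_t\in St\}$ with $i\in s$.

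Now split and recombine. By Solovay's theorem (see \cite{Jech}) each $A_s$ with $s\in T$ can be partitioned into two stationary sets $B_s$ and $C_s$; put $x:=\bigcup_{s\in T}B_s$. Then $x\in St$, since it contains some $B_s$ and $T\ne\emptyset$. Fix $i<\lambda$ and choose $s\in T$ with $i\in s$; then $A_s\subseteq y_i$, and since the atoms are pairwise disjoint, $x\cap A_s=B_s$ and $A_s\setminus x=C_s$. Hence $B_s\subseteq x\cap y_i$ and $C_s\subseteq y_i\setminus x$, and both are stationary, so $x$ stationarily splits $y_i$. As $i$ was arbitrary, $x$ stationarily splits every member of $\mathcal{R}$, which is the desired conclusion.

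I expect the only real obstacle to be the very first move: producing one $x$ that works for all $y_i$ simultaneously, which their mutual overlaps obstruct. Passing to the atoms decouples the choices, and it is exactly there that strong limitness of $\kappa$ is needed — to keep the number of atoms below $\kappa$ so that $\kappa$-completeness of $NS$ applies. The remaining ingredients (Solovay splitting, $\kappa$-completeness of $NS$) are entirely standard.
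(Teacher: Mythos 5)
Your proof is correct and follows essentially the same route as the paper: refine the family to a pairwise disjoint collection of fewer than $\kappa$ pieces (using that $\kappa$ is a strong limit so the number of pieces stays below $\kappa$, and $\kappa$-completeness of $NS$ to find a stationary piece inside each $y_i$), then build the splitter by choosing one stationary half of each stationary piece. Your version is a mild streamlining — you work directly with the Boolean atoms of the family rather than first partitioning each $y_i$ into $\lambda$ stationary sets, and you make explicit (via Solovay splitting) the final step the paper leaves as ``one can easily construct''.
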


\begin{proof}
Let $(x_i)_{i< \lambda}$ with $\lambda < \kappa$ be a family of stationary sets and w.l.o.g. assume that $\kappa \subseteq_{cl}^* \bigcup_{i< \lambda} x_i$. Assume that $(x_{i,j})_{j< \lambda}$ is a partition of $x_i$ into $\lambda$ many stationary sets and define $x_{i, \lambda}:=\kappa\setminus x_i$ for every $i< \lambda$. We will find a common refinement of the partitions $(x_{i,j})_{j \leq \lambda}$.\\
For every $s \in (\lambda+1)^\lambda$ define $y_s:= \bigcap_{i< \lambda} x_{i, s(i)}$. Clearly, if $s_1, s_2 \in (\lambda+1)^\lambda$ with $s_1 \neq s_2$ then $y_{s_1} \cap y_{s_2} = \emptyset$. Now set $S:=\{s \in (\lambda+1)^\lambda \colon y_s \in St\}$ and note that since $(\lambda+1)^\lambda < \kappa$ and every $x_{i,j}= \bigcup_{s \in (\lambda+1)^\lambda,\, s(i)=j} \, y_s$, we clearly have that $\kappa \subseteq_{cl}^* \bigcup_{s \in S} y_s$ and $(y_s)_{s \in S}$ refines every partition $(x_{i,j})_{j \leq \lambda}$.\\
Since the $y_s$ are pairwise disjoint, one can now easily construct a set $y^* \in St$ which stationarily splits $y_s$ for every $s \in S$, and hence stationarily splits $x_i$ for every $i< \lambda$.
\end{proof}

We will later see that $\mathfrak{r}_\kappa^{cl} > \kappa$ can be forced.

\begin{definition} \label{3D1}
Let $x \subseteq \kappa$ be stationary. We say that $\diamondsuit_\kappa(x)$ holds iff there exists a sequence $(s_i)_{i \in x}$ with $s_i \subseteq i$ and for every $y \subseteq \kappa$ the set $\{i \in x \colon y \restriction i = s_i\}$ is stationary (see \cite{Jech}).
\end{definition}

\begin{question}
Is $\mathfrak{r}_\kappa^{cl} = \kappa$ consistent? Does $\forall x \in St \colon \diamondsuit_\kappa(x)$ imply $\mathfrak{r}_\kappa^{cl} > \kappa$? How does $\mathfrak{r}_\kappa^{cl}$ relate to $\mathfrak{r}_\kappa$?
\end{question}

Concerning the various definitions of ultrafilter numbers:

\begin{lemma}
For $\kappa$ measurable we have:
\begin{enumerate}
\item $\kappa^+ \leq \mathfrak{r}_\kappa \leq \mathfrak{u}_\kappa \leq \mathfrak{u}_\kappa^{cl} \leq  \mathfrak{u}_\kappa^{nm}$
\item $\kappa \leq \mathfrak{r}_\kappa^{cl} \leq \mathfrak{u}_\kappa^{cl^*}\leq \mathfrak{u}_\kappa^{nm^*}$, $\mathfrak{u}_\kappa^{cl^*}\leq \mathfrak{u}_\kappa^{cl}$ and $\mathfrak{u}_\kappa^{nm^*}\leq \mathfrak{u}_\kappa^{nm}$
\item $\mathfrak{u}_\kappa^{me}=\mathfrak{u}_\kappa^{nm}$ and $\kappa^+ \leq \mathfrak{u}_\kappa^{nm^*}$
\end{enumerate}
\end{lemma}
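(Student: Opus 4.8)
The plan is to go through the inequalities one at a time. Most of them are \emph{soft} — a witness for one cardinal is, essentially verbatim, a witness for another — and rest on two observations, while only the lower bound $\kappa^+ \le \mathfrak{u}_\kappa^{nm^*}$ will require real work. \emph{Two observations.} (a) If $\mathcal{B} \subseteq St$ and $\mathcal{B}$ is a base for an ultrafilter $\mathcal{U}$ on $\kappa$ — or merely $\mathcal{B} \cup Cl$ is a subbase for $\mathcal{U}$ — then every member of $\mathcal{U}$ is stationary: it contains a finite intersection of members of $\mathcal{B}$ together with a club, and such an intersection is stationary (otherwise it is disjoint from some club and $\mathcal{U}$ is improper), so it contains a stationary set. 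Consequently $\mathcal{U}$ is uniform, and since $\kappa \setminus c$ is non-stationary for every club $c$ we get $Cl \subseteq \mathcal{U}$. (b) A normal measure is a $\kappa$-complete uniform ultrafilter, extends $Cl$, is closed under diagonal intersections of $\kappa$-indexed families, and has only stationary members (see \cite{Jech}). From (a), (b) and the chain ``normal measure $\Rightarrow$ measure $\Rightarrow$ ultrafilter'' one reads off immediately $\mathfrak{u}_\kappa \le \mathfrak{u}_\kappa^{cl} \le \mathfrak{u}_\kappa^{nm}$, $\mathfrak{u}_\kappa^{cl^*} \le \mathfrak{u}_\kappa^{nm^*}$, $\mathfrak{u}_\kappa^{me} \le \mathfrak{u}_\kappa^{nm}$, and — because $Cl \subseteq \mathcal{U}$, so a base becomes a subbase once $Cl$ is added — $\mathfrak{u}_\kappa^{cl^*} \le \mathfrak{u}_\kappa^{cl}$ and $\mathfrak{u}_\kappa^{nm^*} \le \mathfrak{u}_\kappa^{nm}$.

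\emph{Reaping versus ultrafilters.} A base $\mathcal{B}$ of a uniform ultrafilter $\mathcal{U}$ is a reaping family: for $y \in [\kappa]^\kappa$ one of $y,\kappa\setminus y$ lies in $\mathcal{U}$, hence contains some $b \in \mathcal{B}$, so $y$ does not split $b$; this gives $\mathfrak{r}_\kappa \le \mathfrak{u}_\kappa$. Similarly, if $\mathcal{B} \subseteq St$ (WLOG closed under finite intersections) and $\mathcal{B} \cup Cl$ is a subbase for an ultrafilter $\mathcal{U}$, then for $x \in St$ one of $x, \kappa\setminus x$ lies in $\mathcal{U}$ and hence contains some $b \cap c$ with $b \in \mathcal{B}$, $c \in Cl$; since $\kappa\setminus c \in NS$ this forces $b \setminus x \in NS$ or $b \cap x \in NS$, i.e. $x$ does not stationarily split $b$. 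Thus $\mathcal{B}$ is a stationary reaping family and $\mathfrak{r}_\kappa^{cl} \le \mathfrak{u}_\kappa^{cl^*}$. For the lower bounds: $\kappa \le \mathfrak{r}_\kappa^{cl}$ is Theorem \ref{3T4} (a measurable cardinal is inaccessible), while $\kappa^+ \le \mathfrak{r}_\kappa$ follows from a standard length-$\kappa$ recursion that, given any $\kappa$ sets in $[\kappa]^\kappa$, produces a single set splitting all of them (fewer than $\kappa$ ordinals are committed at any stage, so a fresh member of, and a fresh non-member of, each given set can always be picked).

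\emph{The equality $\mathfrak{u}_\kappa^{me} = \mathfrak{u}_\kappa^{nm}$.} Only $\mathfrak{u}_\kappa^{nm} \le \mathfrak{u}_\kappa^{me}$ needs an argument. Let $\mathcal{B} \subseteq St$ be a base for a measure $\mathcal{U}$ and let $j = j_\mathcal{U} \colon V \to M$ be the ultrapower embedding; since $\mathcal{U}$ is $\kappa$-complete and uniform, $\mathrm{crit}(j) = \kappa < j(\kappa)$. Pick $g \colon \kappa \to \kappa$ representing $\min(j(\kappa)\setminus\kappa)$, so that $\mathcal{W} := g_*(\mathcal{U}) = \{X \subseteq \kappa : g^{-1}(X) \in \mathcal{U}\} = \{X : \min(j(\kappa)\setminus\kappa) \in j(X)\}$ is a normal measure (the usual construction of a normal measure from an arbitrary one, see \cite{Jech}). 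Then $\{g''b : b \in \mathcal{B}\}$ is a base for $\mathcal{W}$ of size $\le |\mathcal{B}|$: $g^{-1}(g''b) \supseteq b \in \mathcal{U}$ gives $g''b \in \mathcal{W}$, and if $X \in \mathcal{W}$ then $g^{-1}(X) \in \mathcal{U}$, so some $b \in \mathcal{B}$ has $b \subseteq g^{-1}(X)$, whence $g''b \subseteq X$; and each $g''b$ is stationary, being a member of the normal measure $\mathcal{W}$. Hence $\mathfrak{u}_\kappa^{nm} \le |\mathcal{B}|$.

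\emph{The main point: $\kappa^+ \le \mathfrak{u}_\kappa^{nm^*}$.} Assume toward a contradiction that some $\mathcal{B} \subseteq St$ with $|\mathcal{B}| \le \kappa$ is such that $\mathcal{B} \cup Cl$ is a subbase for a normal measure $\mathcal{U}$; WLOG $\mathcal{B} = \{b_\alpha : \alpha < \kappa\}$ is closed under finite intersections, so $\{b_\alpha \cap c : \alpha < \kappa,\ c \in Cl\}$ is a base for $\mathcal{U}$. By Solovay's splitting theorem (\cite{Jech}) write each $b_\alpha = b_\alpha^0 \sqcup b_\alpha^1$ as a union of two disjoint stationary sets; exactly one of them, say $b_\alpha^{i(\alpha)}$, lies in $\mathcal{U}$, and $b_\alpha^{1-i(\alpha)}$ is stationary. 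Let $Z := \triangle_{\alpha<\kappa} b_\alpha^{i(\alpha)}$, which lies in $\mathcal{U}$ by normality. Then $b_{\alpha_0} \cap c \subseteq Z$ for some $\alpha_0 < \kappa$ and some club $c$, so $b_{\alpha_0} \setminus Z \subseteq \kappa\setminus c \in NS$. On the other hand, any $\xi \in b_{\alpha_0}^{1-i(\alpha_0)}$ with $\xi > \alpha_0$ satisfies $\xi \in b_{\alpha_0}$ and $\xi \notin b_{\alpha_0}^{i(\alpha_0)}$, so the index $\alpha_0 < \xi$ witnesses $\xi \notin Z$; hence $b_{\alpha_0} \setminus Z \supseteq b_{\alpha_0}^{1-i(\alpha_0)} \setminus (\alpha_0 + 1)$ is stationary, a contradiction. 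I expect this last step to be the principal obstacle; it uses closure under diagonal intersections in an essential way — which is precisely why the analogous lower bound is \emph{not} claimed for $\mathfrak{u}_\kappa^{cl^*}$.
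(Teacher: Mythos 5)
Your proof is correct, and for most of the lemma it runs along the same lines as the paper: parts 1 and 2 are the same soft witness-transfers the paper dismisses as obvious (with Theorem \ref{3T4} supplying $\kappa \le \mathfrak{r}_\kappa^{cl}$), and your proof of $\mathfrak{u}_\kappa^{me}=\mathfrak{u}_\kappa^{nm}$ is exactly the paper's: pass to the ultrapower, derive the normal measure from the seed $\kappa=[g]_{\mathcal{U}}$, and push the base forward along $g$. The one place you genuinely diverge is the main point, $\kappa^+ \le \mathfrak{u}_\kappa^{nm^*}$. The paper enumerates $\mathcal{B}=(x_i)_{i<\kappa}$, notes $\triangle_{i<\kappa}\, x_i \in \mathcal{U}$, and simply asserts that $x \nsubseteq_{cl}^* \triangle_{i<\kappa}\, x_i$ for every $x \in \mathcal{B}$; as stated this needs further justification (if some $x\in\mathcal{B}$ were $\subseteq_{cl}^*$-below the diagonal intersection, then $x$ would be $\subseteq_{cl}^*$-below every member of $\mathcal{B}$, so $\{x\cap cl \colon cl \in Cl\}$ would generate $\mathcal{U}$, and refuting that still requires a splitting argument). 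Your version builds that step in from the start: split each $b_\alpha$ into two disjoint stationary halves by Solovay's theorem, diagonally intersect the halves chosen by $\mathcal{U}$ to get $Z\in\mathcal{U}$, and then any $b_{\alpha_0}\cap c\subseteq Z$ is immediately contradictory because $b_{\alpha_0}\setminus Z$ contains the discarded stationary half above $\alpha_0$ while being forced into $NS$. So the outline is the same (diagonal intersection against a would-be $\kappa$-sized subbase, using normality), but your Solovay-splitting refinement makes the final contradiction direct and in fact supplies the justification the paper's terse claim leaves implicit; the rest of your write-up (the reaping bounds, $Cl\subseteq\mathcal{U}$, stationarity of all members of $\mathcal{U}$) is accurate bookkeeping consistent with what the paper takes for granted.
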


\begin{proof}
1.) and 2.) should be obvious (using Theorem \ref{3T4}). Hence let us prove 3.): We clearly have $\mathfrak{u}_\kappa^{me} \leq \mathfrak{u}_\kappa^{nm}$. On the other hand let $\mathcal{U}$ be a measure such that there exists a base $\mathcal{B}$ of $\mathcal{U}$ with $\vert \mathcal{B} \vert = \mathfrak{u}_\kappa^{me}$. Let $V^\kappa / \mathcal{U}$ denote the ultrapower of $V$ modulo $\mathcal{U}$, let $M:=\text{mos}(V^\kappa / \mathcal{U})$ be the transitive collapse and $j \colon V \rightarrow M$ the elementary embedding. Pick $f \colon \kappa \rightarrow \kappa$ such that $\kappa= \text{mos}([f]_{\mathcal{U}})$. Then $\mathcal{V}:=\{x \subseteq \kappa \colon \kappa \in j(x)\}$ is a normal measure and it easily follows that $\mathcal{V}=\{x \subseteq \kappa \colon \exists y \in \mathcal{U}\,\, f[y] \subseteq x\}$.
Hence $f[\mathcal{B}]$ is a base of $\mathcal{V}$ and  $ \mathfrak{u}_\kappa^{nm}\leq \mathfrak{u}_\kappa^{me}$ follows.\\
To show that $\kappa^+ \leq \mathfrak{u}_\kappa^{nm^*}$ we assume towards a contradiction that $\mathcal{U}$ is a normal ultrafilter and there exists $\mathcal{B} \subseteq \mathcal{U}$ with $\vert \mathcal{B} \vert= \kappa$ such that $\{ y \in St \colon \exists x \in \mathcal{B}\, \exists cl \in Cl \,\, y= x \cap cl\}$ is a base of $\mathcal{U}$. If we enumerate $\mathcal{B}$ as $(x_i)_{i< \kappa}$ then we see that $\triangle_{i<\kappa} \, x_i \in \mathcal{U}$. But for all $x \in \mathcal{B}$ we have $x \nsubseteq_{cl}^* \triangle_{i<\kappa} \, x_i$ which leads to a contradiction.
\end{proof}

\begin{lemma}
By \cite{Montoya} the following is consistent: $\kappa^+ < \mathfrak{r}_\kappa = \mathfrak{u}_\kappa^{nm} < 2^\kappa$.
\end{lemma}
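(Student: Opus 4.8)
The plan is to quote the forcing construction of \cite{Montoya} and read off the small extra information that is needed. Over a ground model in which $\kappa$ carries enough large-cardinal strength for Montoya's iteration to preserve measurability, fix a regular $\lambda$ with $\kappa^{+}<\lambda$ and a cardinal $\mu\ge\lambda$ with $\mu^{\kappa}=\mu$. In the resulting extension $V[G]$ the cardinal $\kappa$ is still measurable, $2^{\kappa}=\mu$, there is an ultrafilter $\mathcal{U}$ on $\kappa$ with a base $\mathcal{B}$ of size $\lambda$, and $\mathfrak{r}_{\kappa}=\lambda$. Two features of the construction are the ones I would use. First, $\mathcal{U}$ may be taken to be a normal measure: the iterands are measure-guided Mathias-type posets, so the tail intersections of the generic sets generate a normal measure. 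Second, the equality $\mathfrak{r}_{\kappa}=\lambda$ carries its lower bound with it, through the book-keeping: by the $\lambda$-chain condition and the $\kappa^{+}$-closure of the iteration, any family of fewer than $\lambda$ stationary sets in $V[G]$ already appears in some $V[G_{\alpha}]$ with $\alpha<\lambda$, and the generic set added at a later stage is, modulo $NS$, almost contained in or almost disjoint from each member of it, hence is not stationarily split by any of them; so no family of size $<\lambda$ is a stationary reaping family.

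Granting this, the lemma is immediate. Since $\mathcal{U}$ is a normal measure, every element of $\mathcal{U}$, and in particular every element of $\mathcal{B}$, is stationary, so $\mathcal{B}\subseteq St$ is a base of the normal measure $\mathcal{U}$ of size $\lambda$; hence $\mathfrak{u}_{\kappa}^{nm}\le\lambda$ in $V[G]$. Conversely, the chain $\kappa^{+}\le\mathfrak{r}_{\kappa}\le\mathfrak{u}_{\kappa}\le\mathfrak{u}_{\kappa}^{cl}\le\mathfrak{u}_{\kappa}^{nm}$ from the previous lemma together with $\mathfrak{r}_{\kappa}=\lambda$ gives $\lambda\le\mathfrak{u}_{\kappa}^{nm}$. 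Thus $\mathfrak{u}_{\kappa}^{nm}=\mathfrak{r}_{\kappa}=\lambda$ in $V[G]$, and by the choice of $\lambda$ and $\mu$ we have $\kappa^{+}<\lambda<\mu=2^{\kappa}$, as required.

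The step needing the most care is extracting those two structural facts from \cite{Montoya}. The normality of $\mathcal{U}$ is not really an obstacle: even if Montoya's witness for $\mathfrak{u}_{\kappa}=\lambda$ were only known to be $\kappa$-complete, part (3) of the previous lemma ($\mathfrak{u}_{\kappa}^{me}=\mathfrak{u}_{\kappa}^{nm}$) would still produce a normal measure of character $\le\lambda$. The real point is the lower bound $\mathfrak{r}_{\kappa}\ge\lambda$: one must check that Montoya's iteration genuinely meets every potential stationary reaping family of size $<\lambda$, so that $\mathfrak{r}_{\kappa}$ is pinned to exactly $\lambda$ and not merely bounded above by $\lambda=\mathfrak{u}_{\kappa}$. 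This is the heart of Montoya's argument, and here the length, support, chain condition and book-keeping of the iteration all enter. Beyond citing it, the only new observation is the trivial one that a base of a normal measure automatically lies inside $St$, so that passing from an ultrafilter base to a base contained in $St$ leaves the character unchanged.
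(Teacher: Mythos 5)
The paper itself gives no argument here: the lemma is a bare citation to \cite{Montoya}, so your overall plan --- quote the model and add the two easy bridging observations --- is exactly what the paper intends. Those observations are essentially right: a base of a normal measure automatically consists of stationary sets (every element meets every club, since the club is in the measure), so a normal witness of character $\lambda$ gives $\mathfrak{u}_\kappa^{nm}\le\lambda$, and part (1) of the preceding lemma together with $\mathfrak{r}_\kappa=\lambda$ gives the matching lower bound. One small caveat: if Montoya's witness is only known to be ${<}\kappa$-complete, you need the \emph{proof} of part (3) (push the base forward along the function representing $\kappa$ in the ultrapower) rather than its statement, because $\mathfrak{u}_\kappa^{me}$ is defined via bases contained in $St$, and a base of a non-normal measure need not consist of stationary sets; the projection argument, however, works for an arbitrary base, so this is harmless.

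The genuine problem is your gloss on the lower bound $\mathfrak{r}_\kappa\ge\lambda$. The mechanism you describe --- the generic added at a later stage is, modulo the ideal, almost contained in or almost disjoint from every member of a small earlier family, hence unsplit by it --- shows that the earlier family is not a \emph{splitting} family; this is what drives $\mathfrak{s}_\kappa$ (and the upper bound on $\mathfrak{u}_\kappa$), not $\mathfrak{r}_\kappa$. To show a family $R$ of size ${<}\lambda$ is not a reaping family you need a single new set that \emph{splits every} member of $R$; in these iterations that role is played by the $\kappa$-Cohen reals added by the ${<}\kappa$-support, as in Lemma \ref{3L3} and its use in the theorems of this paper, not by the Mathias generics themselves. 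You also slide between $\mathfrak{r}_\kappa$ and the stationary reaping number $\mathfrak{r}_\kappa^{cl}$ ("no family of size ${<}\lambda$ is a stationary reaping family"): the lemma and the inequality chain concern the ordinary $\mathfrak{r}_\kappa$ modulo the bounded ideal, and the paper explicitly leaves the relation between $\mathfrak{r}_\kappa$ and $\mathfrak{r}_\kappa^{cl}$ open, so no transfer between them may be assumed. Since you ultimately defer this lower bound to \cite{Montoya}, the proposal survives as a citation-plus-trivialities argument matching the paper, but the sketched mechanism should be corrected or deleted, as written it argues the wrong direction.
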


\begin{question}
Are there any other provable relations between the various ultrafilter numbers? Are $\mathfrak{u}_\kappa^{cl^*}<\mathfrak{u}_\kappa^{cl}$ or $\mathfrak{u}_\kappa^{nm^*}<\mathfrak{u}_\kappa^{nm}$ consistent? Is even $\mathfrak{u}_\kappa^{cl^*}=\kappa$ consistent?
\end{question}

Let us now investigate the stationary splitting number $\mathfrak{s}_\kappa^{cl}$:

\begin{theorem} \label{3T1}
For $\kappa$ regular uncountable we have $\mathfrak{s}_\kappa^{cl} \geq \kappa$ iff $\kappa$ is inaccessible.
\end{theorem}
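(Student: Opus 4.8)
\emph{Proof proposal.} The plan is to prove the two implications separately; both hinge on comparing $2^{\lambda}$ with $\kappa$ for $\lambda<\kappa$, and both use that $NS$ is $\kappa$-complete (for $\kappa$ regular, the intersection of fewer than $\kappa$ clubs is a club).

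Suppose first that $\kappa$ is inaccessible, and let $\mathcal{S}\subseteq St$ with $\vert\mathcal{S}\vert=\lambda<\kappa$; I want to produce a stationary set that is not stationarily split by any member of $\mathcal{S}$. Enumerate $\mathcal{S}=(y_i)_{i<\lambda}$ and consider, for $s\in 2^{\lambda}$, the atom $z_s:=\bigcap_{i<\lambda}y_i^{s(i)}$, where $y^{0}:=y$ and $y^{1}:=\kappa\setminus y$. The $z_s$ are pairwise disjoint with union $\kappa$, and since $\kappa$ is a strong limit there are only $2^{\lambda}<\kappa$ of them; as $\kappa$ itself is stationary and $NS$ is $\kappa$-complete, some $z_{s}$ must be stationary. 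For every $i<\lambda$ this $z_{s}$ is either contained in $y_i$ or disjoint from it, so $y_i$ does not stationarily split $z_{s}$; hence $\mathcal{S}$ is not a stationary splitting family. Since $\mathcal{S}$ was arbitrary of size $<\kappa$, this gives $\mathfrak{s}_\kappa^{cl}\geq\kappa$. (This mirrors the proof of Theorem~\ref{3T4}, with $2^{\lambda}<\kappa$ replacing $(\lambda+1)^{\lambda}<\kappa$.)

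For the converse I argue the contrapositive. If $\kappa$ is regular uncountable but not inaccessible then $\kappa$ is a successor cardinal or is weakly but not strongly inaccessible, so in either case there is $\lambda<\kappa$ with $2^{\lambda}\geq\kappa$; fix such a $\lambda$ together with an injection $\alpha\mapsto c_{\alpha}$ of $\kappa$ into $2^{\lambda}$. Put $S_i:=\{\alpha<\kappa\colon c_{\alpha}(i)=1\}$ for $i<\lambda$ and let $\mathcal{S}:=\{S_i\colon i<\lambda\}\cap St$, so $\vert\mathcal{S}\vert\leq\lambda<\kappa$. I claim $\mathcal{S}$ is a stationary splitting family. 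Suppose some $x\in St$ were not stationarily split by any member of $\mathcal{S}$. Then for every $i<\lambda$ at least one of $x\cap S_i$, $x\setminus S_i$ lies in $NS$: this is exactly the failure of splitting when $S_i\in St$, and it is automatic when $S_i\notin St$ (then $S_i\in NS$, so $x\cap S_i\in NS$). Choose $t(i)\in 2$ so that $N_i:=\{\alpha\in x\colon c_{\alpha}(i)\neq t(i)\}$ — which is $x\cap S_i$ if $t(i)=0$ and $x\setminus S_i$ if $t(i)=1$ — lies in $NS$. Then $\bigcup_{i<\lambda}N_i=x\setminus\{\alpha\in x\colon c_{\alpha}=t\}$, which by injectivity of $c$ differs from $x$ in at most one ordinal and is therefore stationary; but it is a union of $\lambda<\kappa$ non-stationary sets, contradicting the $\kappa$-completeness of $NS$. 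Hence every $x\in St$ is stationarily split by a member of $\mathcal{S}$; in particular $\mathcal{S}\neq\emptyset$, and so $\mathfrak{s}_\kappa^{cl}\leq\lambda<\kappa$.

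The one genuinely substantive point is the observation made implicitly above: once $NS$ is $\kappa$-complete, any family of fewer than $\kappa$ subsets of $\kappa$ that merely separates the points of $\kappa$ is automatically a stationary splitting family, so the sole obstruction to $\mathfrak{s}_\kappa^{cl}<\kappa$ is the impossibility of separating $\kappa$ points with $<\kappa$ binary coordinates — i.e. $\kappa$ being a strong limit. I do not foresee any real obstacle; the only things to be slightly careful about are keeping $\mathcal{S}$ inside $St$ (hence the intersection with $St$) and, in the first direction, that the atoms $z_s$ cover $\kappa$.
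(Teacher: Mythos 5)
Your proof is correct, but it takes a different route from the paper in both directions. For the non-inaccessible direction the paper fixes the \emph{minimal} $\lambda$ with $2^\lambda\geq\kappa$ and uses the tree-indexed family $x_s=\{i<\kappa\colon s\triangleleft f(i)\}$, $s\in 2^{<\lambda}$, obtaining a splitting family of size $2^{<\lambda}<\kappa$ via a branch/linearity argument; you instead take the $\lambda$ coordinate sets $S_i$ directly and derive the contradiction from injectivity plus $\kappa$-completeness of $NS$, which gives a splitting family of size $\leq\lambda$ and avoids any appeal to minimality of $\lambda$. For the inaccessible direction the paper argues via an elementary submodel $M\prec H(\theta)$ containing $X$ and $2^\lambda$, picks $i^*$ above $\sup(M\cap\kappa)$ inside all clubs of $M$, and uses elementarity to see that the induced intersection $y=\bigcap Y_1\setminus\bigcup Y_0$ is stationary and unsplit; you instead partition $\kappa$ into the at most $2^\lambda<\kappa$ atoms $z_s$ of the family and invoke $\kappa$-completeness of $NS$ to find a stationary atom, which no member of the family can split. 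Your atom argument is shorter and more elementary, and it is the same decomposition idea the paper itself uses for Theorem~\ref{3T4}; the paper's submodel/reflection technique is heavier machinery than needed here, though it is the style of argument that scales to the stronger statements in the paper (the normal${}^*$ filter characterization of $\mathfrak{s}_\kappa^{cl}>\kappa$). Minor points you already handle correctly: keeping $\mathcal{S}$ inside $St$, treating non-stationary $S_i$ separately, and noting that removing a single point from a stationary set keeps it stationary.
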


\begin{proof}
We follow the proof of \cite{Suzuki}. First assume that $\kappa$ is not inaccessible, hence there exists a minimal  $\lambda < \kappa$ such that $2^\lambda\geq \kappa$. Let $f \colon \kappa \rightarrow 2^\lambda$ be injective and for every  $s \in 2^{<\lambda}$ define $x_s:=\{i< \kappa \colon s \triangleleft f(i)\}$. We set $X:=\{x_s \colon s \in 2^{<\lambda} \land x_s \in St\}$ which is of size $2^{<\lambda} < \kappa$, and claim that $X$ is a stationary splitting family. Towards a contradiction assume that $y \in St$ is not stationarily split by $X$. It follows that the set $S:=\{ s \in 2^{<\lambda} \colon y \subseteq_{cl}^* x_s \}$ is linearly ordered by $\triangleleft$ , because for incompatible $s_1, s_2 \in 2^{<\lambda}$ we have that $x_{s_1}$ and $x_{s_2}$ are disjoint. Let us define $t:= \bigcup_{s \in S} s \in 2^\lambda$. Now we can deduce that $y \subseteq f^{-1}(\{t\}) \cup \bigcup_{s \in 2^{<\lambda} \setminus S}\, x_s$. However, this leads to a contradiction, because $y$ would be covered by a union of ${<}\kappa$ many non-stationary sets.\\
On the other hand assume that $\kappa$ is inaccessible and let $X \subseteq St$ be of size $\lambda<\kappa$. Let $\theta$ be large enough and choose an elementary submodel $M \prec H(\theta)$ with $\kappa, X \in M$, $X, 2^\lambda \subseteq M$ and $\vert M \vert < \kappa$. Now pick $i^* > \sup(M \cap \kappa)$ such that $i^* \in \bigcap_{cl \in Cl \cap M} cl$. The ordinal $i^*$ induces a partition $Y_0, Y_1$ of $X$: set $Y_0:=\{x \in X \colon i^* \notin x\}$ and $Y_1:=\{x \in X \colon i^* \in x\}$. Since $2^\lambda \subseteq M$ we can deduce that also $Y_0, Y_1 \in M$, and hence $y:= \bigcap Y_1 \setminus \bigcup Y_0 \in M$. If we can show that $y \in St$, this will imply that $X$ is not a stationary splitting family. To this end let $cl \in Cl \cap M$ be arbitrary, and we obviously have $H(\theta)\vDash i^* \in y \cap cl$. By elementarity it follows that $M \vDash y \cap cl \neq \emptyset$, and since $cl$ was arbitrary, we can deduce that $M \vDash y \in St$. Again by elementarity we have $ y\in St$.
\end{proof}

The following definition already appeared in \cite{Schlicht}:

\begin{definition}
Let $F \subseteq \mathfrak{P}(\kappa)$ be a uniform filter \footnote{In particular we can assume that $F$ contains the co-bounded filter.}, i.e. for every $x \in F$ we have $\vert x \vert = \kappa$. We define:
\begin{itemize}
\item $F$ is ${<}\kappa$-complete$^*$ iff for every $\lambda < \kappa$ and every $(x_i)_{i< \lambda}$ with $x_i \in F$ we have $\vert \bigcap_{i < \lambda} x_i \vert = \kappa$. \footnote{Note that any ${<}\kappa$-complete$^*$ filter $F$ can be extended to a ${<}\kappa$-complete filter $\tilde{F}$.}
\item $F$ is normal$^*$ iff for every $(x_i)_{i< \kappa}$ with $x_i \in F$ we have that $\triangle_{i< \kappa} \, x_i$ is stationary.
\item $F$ measures a set $X \subseteq \mathfrak{P}(\kappa)$ iff for every $x \in X$ either $x \in F $ or $\kappa \setminus x \in F$ holds true.
\end{itemize}
\end{definition}

Note that we explicitly do not require that the (diagonal) intersection is again an element of $F$.  Clearly, if $F$ is normal$^*$, then it is also ${<}\kappa$-complete$^*$.

\begin{definition}
We say that $\kappa$ has the normal$^*$ filter property iff for every $X \subseteq \mathfrak{P}(\kappa)$ of size ${\leq}\kappa$ there exists a normal$^*$ filter $F$ measuring $X$.
\end{definition}

The following notion clearly strengthens weak compactness and is downwards absolute to $L$ (see \cite{Jensen}):

\begin{definition}
Recall that $\kappa$ is ineffable iff for every partition $f \colon [\kappa]^2 \rightarrow \{0,1\}$ there exists a stationary homogeneous set $x \subseteq \kappa$.
\end{definition}

The following theorem was proven in \cite{Zwicker}:

\begin{theorem} \label{3T2}
Let $\kappa$ be regular uncountable. Then $\kappa$ has the normal$^*$ filter property iff $\kappa$ is ineffable.
\end{theorem}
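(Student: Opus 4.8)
The plan is to establish both implications of Theorem~\ref{3T2}, using throughout the standard equivalent of ineffability: $\kappa$ is ineffable iff for every sequence $(A_\beta)_{\beta<\kappa}$ with $A_\beta \subseteq \beta$ there is $A \subseteq \kappa$ such that $\{\beta<\kappa \colon A \cap \beta = A_\beta\}$ is stationary. (The equivalence of this with the partition form given above is classical; the sequence form is the more convenient one for tracking which sets a filter measures.)

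Assume first that $\kappa$ is ineffable and fix $X \subseteq \mathfrak{P}(\kappa)$ with $\vert X \vert \leq \kappa$, enumerated as $(x_\alpha)_{\alpha<\kappa}$. I would apply ineffability to the sequence $A_\beta := \{\alpha < \beta \colon \beta \in x_\alpha\}$ to get $S \subseteq \kappa$ with $T := \{\beta<\kappa \colon S \cap \beta = A_\beta\}$ stationary, so that $\alpha \in S \iff \beta \in x_\alpha$ for all $\alpha < \beta \in T$. Setting $y_\alpha := x_\alpha$ if $\alpha \in S$ and $y_\alpha := \kappa \setminus x_\alpha$ otherwise, let $F$ be the filter on $\kappa$ generated by $Cl$ together with $\{y_\alpha \colon \alpha<\kappa\}$. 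The key observation is that $T \setminus (\gamma+1) \subseteq y_{\alpha_1} \cap \dots \cap y_{\alpha_n}$ whenever $\gamma \geq \alpha_1, \dots, \alpha_n$; hence every finite intersection of generators of $F$ contains a set of the form $(T \setminus (\gamma+1)) \cap c$ with $c$ club, which is stationary, so $F$ is a proper uniform filter. Moreover $y_\alpha \in \{x_\alpha, \kappa \setminus x_\alpha\}$, so $F$ measures $X$, and the same observation gives $T \subseteq \triangle_{\alpha<\kappa} y_\alpha$.

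It remains to verify that this $F$ is normal$^*$. Given $(z_\beta)_{\beta<\kappa}$ with $z_\beta \in F$, write $z_\beta \supseteq c_\beta \cap \bigcap_{i \leq n_\beta} y_{\alpha^\beta_i}$ with $c_\beta$ club and $n_\beta$ finite, put $h(\beta) := \bigl(\max_{i \leq n_\beta} \alpha^\beta_i\bigr)+1$, and let $C_h$ be the club of ordinals closed under $h$. For any $\gamma$ in the stationary set $T \cap C_h \cap \triangle_{\beta<\kappa} c_\beta$ and any $\beta < \gamma$, all $\alpha^\beta_i$ are below $\gamma$, so $\gamma \in y_{\alpha^\beta_i}$ (since $\gamma \in T$) and $\gamma \in c_\beta$, whence $\gamma \in z_\beta$; thus $\gamma \in \triangle_{\beta<\kappa} z_\beta$, and $\triangle_{\beta<\kappa} z_\beta$ is stationary. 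I expect this computation to be the main obstacle: one must absorb the clubs $c_\beta$ through their diagonal intersection, the finitely-many-per-coordinate indices $\alpha^\beta_i$ through the closure club $C_h$, and only then intersect with the ineffable guessing set $T$. The remaining checks (that $F$ is a genuine uniform filter and that it measures $X$) are routine.

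For the converse, suppose $\kappa$ has the normal$^*$ filter property and let $(A_\beta)_{\beta<\kappa}$ with $A_\beta \subseteq \beta$ be given. Put $x_\alpha := \{\beta<\kappa \colon \alpha \in A_\beta\}$, let $F$ be a normal$^*$ filter measuring $X := \{x_\alpha \colon \alpha<\kappa\}$, and set $A := \{\alpha<\kappa \colon x_\alpha \in F\}$. Let $y_\alpha := x_\alpha$ if $\alpha \in A$ and $y_\alpha := \kappa \setminus x_\alpha$ otherwise; since $F$ measures $X$ we have $y_\alpha \in F$ for every $\alpha$, so by normality$^*$ the set $D := \triangle_{\alpha<\kappa} y_\alpha$ is stationary. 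For $\beta \in D$ and $\alpha < \beta$, the condition $\beta \in y_\alpha$ unwinds to $\alpha \in A \iff \alpha \in A_\beta$, so $A \cap \beta = A_\beta$; hence $D \subseteq \{\beta<\kappa \colon A \cap \beta = A_\beta\}$ is stationary and $\kappa$ is ineffable. Once $X$ is chosen in this way, this direction is pure bookkeeping.
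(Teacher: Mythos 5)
Your proof is correct. Note that the paper itself does not prove this theorem at all --- it is quoted from Zwicker --- so there is no in-paper argument to compare yours against; your write-up supplies a proof where the paper has none, and it follows the standard route. You work with the Jensen--Kunen reformulation of ineffability via sequences $(A_\beta)_{\beta<\kappa}$ with $A_\beta\subseteq\beta$; you are right that the equivalence with the partition definition the paper uses is classical, but since the paper only states the partition form you should cite or sketch that equivalence explicitly rather than just assert it. Both implications check out. In the forward direction, the filter generated by $Cl$ together with the sets $y_\alpha$ is proper and uniform because every finite intersection of generators contains a final segment of the guessing set $T$ intersected with a club, hence a stationary set; and your normality$^*$ computation --- absorbing the clubs $c_\beta$ into $\triangle_{\beta<\kappa}c_\beta$, absorbing the finitely many indices $\alpha^\beta_i$ into the club of ordinals closed under $h$, and only then intersecting with $T$ --- is exactly what is needed, since for $\gamma$ in that stationary intersection and $\beta<\gamma$ one indeed gets $\gamma\in c_\beta\cap\bigcap_{i\le n_\beta}y_{\alpha^\beta_i}\subseteq z_\beta$. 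The only bookkeeping you gloss over is the trivial case in which $z_\beta$ contains no generator of one of the two kinds, which is handled by inserting $\kappa$ as the club or throwing in an arbitrary $y_\alpha$ (shrinking the witness only helps). The converse direction is, as you say, pure unwinding of definitions: measuring the sets $x_\alpha=\{\beta\colon\alpha\in A_\beta\}$ and taking the diagonal intersection of the chosen sides $y_\alpha$ produces a stationary set of $\beta$ with $A\cap\beta=A_\beta$. So the argument is complete modulo the explicitly flagged classical equivalence.
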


\begin{theorem} 
For $\kappa$ regular uncountable we have $\mathfrak{s}_\kappa^{cl} > \kappa$ iff $\kappa$ is ineffable.
\end{theorem}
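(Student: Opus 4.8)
The plan is to prove both directions using the characterization of ineffability via the normal$^*$ filter property (Theorem \ref{3T2}), in the spirit of the proof of Theorem \ref{3T1}.

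\textbf{The direction $\kappa$ ineffable $\Rightarrow \mathfrak{s}_\kappa^{cl} > \kappa$.} Suppose $\mathcal{S} \subseteq St$ has size $\lambda \leq \kappa$; I must find $x \in St$ that is not stationarily split by any member of $\mathcal{S}$. Apply the normal$^*$ filter property to the set $X := \mathcal{S}$ (of size ${\leq}\kappa$) to obtain a normal$^*$ filter $F$ measuring $X$. For each $y \in \mathcal{S}$, let $z_y := y$ if $y \in F$ and $z_y := \kappa \setminus y$ otherwise, so $z_y \in F$. Since $F$ is normal$^*$, and since $\lambda \leq \kappa$, the diagonal intersection $x := \triangle_{i < \lambda} z_{y_i}$ (after fixing an enumeration $\mathcal{S} = \{y_i : i < \lambda\}$, padding with $\kappa \in F$ if $\lambda < \kappa$) is stationary. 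I claim $x$ is not stationarily split by any $y \in \mathcal{S}$: modulo a club, $x$ is contained in every $z_{y_i}$, so for each $y_i$ either $x \subseteq_{cl}^* y_i$ (if $z_{y_i} = y_i$) or $x \cap y_i \in NS$ (if $z_{y_i} = \kappa \setminus y_i$); in either case one of $x \cap y_i$, $x \setminus y_i$ is non-stationary. The one subtlety is that the diagonal intersection is taken over an index set of size $\lambda$, but the definition of normal$^*$ is phrased for sequences of length exactly $\kappa$; this is handled by padding the sequence with copies of $\kappa$ (or intersecting with a club coding the tail), which does not affect the diagonal intersection modulo a club.

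\textbf{The direction $\kappa$ not ineffable $\Rightarrow \mathfrak{s}_\kappa^{cl} \leq \kappa$.} By Theorem \ref{3T2}, there is some $X \subseteq \mathfrak{P}(\kappa)$ with $\vert X \vert \leq \kappa$ such that no normal$^*$ filter measures $X$. First, if $\kappa$ is not inaccessible then $\mathfrak{s}_\kappa^{cl} < \kappa$ by Theorem \ref{3T1}, so I may assume $\kappa$ is inaccessible (in particular weakly compact, so $\kappa$ failing to be ineffable is the genuinely relevant case). The goal is to extract from $X$ a stationary splitting family $\mathcal{S}$ of size ${\leq}\kappa$. Set $\mathcal{S} := \{x \in X : x \in St\} \cup \{\kappa \setminus x : x \in X,\ \kappa \setminus x \in St\}$, of size ${\leq}\kappa$. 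Suppose toward a contradiction that some $y \in St$ is not stationarily split by any member of $\mathcal{S}$. Then for every $x \in X$: if $x \in St$ and $y$ is not split by $x$, then either $y \subseteq_{cl}^* x$ or $y \cap x \in NS$; combining with the analogous statement for $\kappa \setminus x$, one shows that for every $x \in X$, exactly one of $y \subseteq_{cl}^* x$, $y \subseteq_{cl}^* \kappa \setminus x$ holds (using that $y$ is stationary, so not both, and that if neither $x$ nor $\kappa\setminus x$ is stationary we reach a contradiction with $y$ stationary). Define $F := \{z \subseteq \kappa : y \subseteq_{cl}^* z\}$. This is a uniform filter containing the club filter, it measures $X$, and it is normal$^*$: given $(z_i)_{i<\kappa}$ with $y \subseteq_{cl}^* z_i$, the standard Fodor-style argument (as used in the proofs of Lemma \ref{3L4} and the triviality of $\mathfrak{p}_\kappa^{cl}$) shows $y \subseteq_{cl}^* \triangle_{i<\kappa} z_i$, hence $\triangle_{i<\kappa} z_i \in St$. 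This contradicts the choice of $X$, so $\mathcal{S}$ is indeed a stationary splitting family and $\mathfrak{s}_\kappa^{cl} \leq \kappa$.

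\textbf{Main obstacle.} The delicate point is the second direction: verifying that the filter $F$ derived from a single stationary set $y$ actually \emph{measures} $X$ — i.e. that for each $x \in X$ we get $x \in F$ or $\kappa \setminus x \in F$ — rather than merely being a filter disjoint from one of $x$, $\kappa\setminus x$ modulo $NS$. This requires carefully unpacking what "$y$ is not stationarily split by $x$" gives us for both $x$ and $\kappa\setminus x$ simultaneously, and ruling out the case where $y$ is almost disjoint (mod $NS$) from both, which cannot happen since $y \subseteq_{cl}^* x \cup (\kappa\setminus x) = \kappa$ forces $y$ to be almost contained in one of them once we know $y$ meets each stationarily or not. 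I also need $\kappa$ to carry the non-stationary ideal non-trivially (true for $\kappa$ regular uncountable) so that these manipulations are meaningful, and I should double-check the degenerate case where $X$ contains sets that are non-stationary with non-stationary complement, which is impossible for $\kappa$ regular, so every $x \in X$ contributes at least one stationary set to $\mathcal{S}$.
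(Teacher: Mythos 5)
Your argument is correct and is essentially the paper's own proof: both directions reduce $\mathfrak{s}_\kappa^{cl}>\kappa$ to the normal$^*$ filter property via Theorem \ref{3T2}, using the diagonal intersection of the measured sets $z_{y_i}$ for one direction and, for the other (which you state contrapositively), the filter $\{z\subseteq\kappa : y\subseteq_{cl}^* z\}$ generated by a non-split stationary $y$, whose normality$^*$ follows from $y\cap\triangle_{i<\kappa}cl_i\subseteq\triangle_{i<\kappa}z_i$. The small extras in your write-up (adding complements to $\mathcal{S}$, the separate treatment of non-stationary members of $X$, the padding for $\lambda<\kappa$) are harmless and do not change the argument.
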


\begin{proof}
We will show that $\mathfrak{s}_\kappa^{cl} > \kappa$ iff $\kappa$ has the normal$^*$ filter property. Then this theorem follows by Theorem \ref{3T2}.\\
Let us first assume that $\mathfrak{s}_\kappa^{cl} > \kappa$ and let $X \subseteq \mathfrak{P}(\kappa)$ be of size ${\leq}\kappa$. We will show that there exists a normal$^*$ filter $F$ measuring $X$. W.l.o.g. $X$ is closed under compliments.
 Since $\mathfrak{s}_\kappa^{cl} > \kappa$ there exists $y^* \in St$ such that $X$ does not stationarily split $y^*$. Now we define $F:=\{ x \in X \colon y^* \subseteq_{cl}^* x\}$ and note that $F$ is obviously an ultrafilter on $X$. We claim that $F$ is normal$^*$. Let $(x_i)_{i< \kappa}$  with $x_i \in F$ be arbitrary and $cl_i \in Cl$ with $y^* \cap cl_i \subseteq x_i$. Then $\triangle_{i< \kappa} \, x_i \supseteq \triangle_{i<\kappa} \, y^* \cap cl_i= y^* \cap \triangle_{i< \kappa} \, cl_i$ which is clearly stationary.\\
On the other hand assume that $\kappa$ has the normal$^*$ filter property and let $X\subseteq St$ be of size $\kappa$. Then there exists a normal$^*$ filter $F$ measuring $X$, and enumerate $X$ as $(x_i)_{i< \kappa}$. Define $y_i:= x_i$ if $x_i \in F$ and $y_i:=\kappa \setminus x_i$ else. Since $F$ is normal$^*$, we can deduce that $y^*:=\triangle_{i< \kappa} \, y_i \in St$. But no $x_i \in X$ can stationarily split $y^*$, hence  $\mathfrak{s}_\kappa^{cl} > \kappa$.
\end{proof}


Before we can state the next theorem, we need the following definition:

\begin{definition}
Let $\alpha$ be a measurable cardinal and let $\mathcal{U}_0$, $\mathcal{U}_1$ and $\mathcal{U}$ be normal measures on $\alpha$. We recall (see \cite{Jech}):
\begin{itemize}
\item the Mitchell order: $\mathcal{U}_0 \triangleleft \mathcal{U}_1$ iff $\mathcal{U}_0 \in V^\kappa /\mathcal{U}_1$, i.e. $\mathcal{U}_0$ is contained in the ultrapower of $V$ modulo $\mathcal{U}_1$
\item $o(\mathcal{U}):=\sup\{o(\mathcal{U}')+1 \colon \mathcal{U}' \triangleleft \mathcal{U}\}$ the order of $\mathcal{U}$
\item $o(\alpha):=\sup\{o(\mathcal{U}')\colon \mathcal{U}' \,\, \text{is normal measure on} \,\, \alpha\}$ the order of $\alpha$
\end{itemize}
\end{definition}

It was proven by Zapletal (see \cite{Zapletal}) that $\mathfrak{s}_\kappa > \kappa^+$ has large consistency strength, and indeed the same proof shows:

\begin{theorem}
Let $\mathfrak{s}_\kappa^{cl} > \kappa^{++}$. Then there exists an inner model with a measurable cardinal $\alpha$ of order $\alpha^{++}$. \footnote{This is equivalent to $\exists \mathcal{F} \colon L[\mathcal{F}] \vDash \exists \alpha \colon \alpha \,\,\text{is measurable with order} \,\, \alpha^{++}$ (see \cite{Mitchell1}) .}
\end{theorem}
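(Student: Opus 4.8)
The plan is to run Zapletal's argument from \cite{Zapletal} for $\mathfrak{s}_\kappa$ almost verbatim, translated through the dictionary already used above: subsets of $\kappa$ become stationary sets, the bounded ideal becomes $NS$, and ordinary ultrafilters become normal$^*$ filters. So suppose, towards a contradiction, that there is no inner model with a measurable cardinal $\alpha$ of order $\alpha^{++}$. Then the core model $K$ below this hypothesis exists (see \cite{Mitchell1}); it is rigid, satisfies the GCH and weak covering, is $\Sigma^1_3$-correct, and carries a maximal coherent sequence $\vec U = \vec U^K$ of normal measures, and our assumption gives $o^K(\alpha) < \alpha^{++}$ for every ordinal $\alpha$. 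Since $\mathfrak{s}_\kappa^{cl} > \kappa^{++} > \kappa$, $\kappa$ is inaccessible by Theorem \ref{3T1}, and even ineffable by the characterisation of $\mathfrak{s}_\kappa^{cl} > \kappa$ proved above.

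First I would read off from $\vec U$ a family $\mathcal S \subseteq St$ with $\vert \mathcal S \vert \le \kappa^{++}$. Because $\kappa$ is inaccessible and $o^K(\alpha) < \alpha^{++} < \kappa$ for every $\alpha < \kappa$, the part of $\vec U$ lying below $\kappa$ involves at most $\kappa$ measures, while $\vec U$ puts only $o^K(\kappa) < \kappa^{++}$ measures on $\kappa$ itself; enumerate all of them as $\langle U_\eta : \eta < \theta \rangle$ with $\theta \le \kappa^{++}$. To each $U_\eta$, a $K$-measure on some $\alpha_\eta \le \kappa$, attach in the standard fine-structural way a set $A_\eta \subseteq \kappa$ coding the local $K$-structure that $U_\eta$ determines (for $\alpha_\eta < \kappa$ roughly the set of $\beta < \kappa$ whose $K$-data at $\beta$ mimics that of $\alpha_\eta$ up to $U_\eta$, and for $\alpha_\eta = \kappa$ a reflection set), and let $\mathcal S$ collect those $A_\eta$ and those complements $\kappa \setminus A_\eta$ that are stationary. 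Then $\vert \mathcal S \vert \le \kappa^{++}$.

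The crux is to show that $\mathcal S$ is a stationary splitting family, which contradicts $\mathfrak{s}_\kappa^{cl} > \kappa^{++}$. Suppose not: some $x \in St$ is stationarily split by no member of $\mathcal S$. Let $F$ be the filter on $\kappa$ generated by $\{ y : x \subseteq_{cl}^* y \}$. The diagonal-intersection computation from the proof that $\mathfrak{s}_\kappa^{cl} > \kappa$ characterises ineffability shows that $F$ is normal$^*$, and by the choice of $x$ it measures every member of $\mathcal S$, hence decides the entire coded copy of $\vec U$. Now I would feed $F$ into Zapletal's core-model computation: such a filter cannot be accommodated by a $K$ with $o^K(\kappa) < \kappa^{++}$ --- via weak covering it yields either a normal measure on $\kappa$ lying outside $\vec U$, against the maximality of $\vec U$, or directly $o^K(\kappa) \ge \kappa^{++}$. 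Either way the choice of $K$ is contradicted, so $\mathcal S$ splits every stationary set and $\mathfrak{s}_\kappa^{cl} \le \kappa^{++}$; thus the standing assumption fails and an inner model with a measurable cardinal $\alpha$ of order $\alpha^{++}$ exists.

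I expect the last step to be the main obstacle. Normal$^*$ filters are deliberately not closed under diagonal intersections, so one cannot simply form an ultrapower and quote a lower bound on $o^K(\kappa)$; instead one has to push the normal$^*$ data through the covering and comparison machinery of $K$ exactly as \cite{Zapletal} does for genuine ultrafilters, and it is precisely this relaxation that raises the threshold from $\mathfrak{s}_\kappa > \kappa^+$, the value in \cite{Zapletal}, to $\mathfrak{s}_\kappa^{cl} > \kappa^{++}$ --- one effectively loses a cardinal in the book-keeping. Checking that Zapletal's comparison and covering arguments still go through with the stationary sets $A_\eta$ and the merely normal$^*$ filter $F$ in place of their classical counterparts is the only genuinely delicate point; the rest is a translation of \cite{Zapletal} through the devices already used in the proofs of Theorem \ref{3T1} and of the ineffability characterisation above.
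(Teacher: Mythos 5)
There is a genuine gap, and it sits exactly where you locate it yourself. Note first that the paper offers no written argument for this theorem at all: it simply asserts that Zapletal's proof for $\mathfrak{s}_\kappa$ adapts, so at the level of strategy you and the author agree. But your write-up does not carry out that adaptation, and the scaffolding you do set up would not support it. The sets $A_\eta$ ``coding the local $K$-structure'' are never actually defined, and, more seriously, the filter $F$ you derive from an unsplit stationary $x$ is only guaranteed to measure the members of $\mathcal{S}$, i.e.\ the $\kappa^{++}$ hand-picked coding sets. That is not enough to run any core-model argument: to contradict the maximality of the measure sequence of $K$ or to force $o^K(\kappa)\geq\kappa^{++}$ one needs an object to which Mitchell's machinery applies, namely a countably complete (and, for the comparison arguments, weakly amenable/normal) $K$-ultrafilter, i.e.\ a filter deciding \emph{every} set in $P(\kappa)\cap K$, so that one can form a well-founded ultrapower of $K$ or invoke the classification of such ultrafilters. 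A filter measuring only $\kappa^{++}$ special sets yields neither an ultrapower nor a measure, so the sentence ``feed $F$ into Zapletal's core-model computation \dots either a normal measure outside $\vec{U}$ or $o^K(\kappa)\geq\kappa^{++}$'' is not a proof step but a restatement of the theorem; you concede as much when you call it the main unchecked obstacle.

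The natural repair is also the faithful translation of Zapletal: take as the candidate splitting family (the stationary members of) $P(\kappa)\cap K$ itself, possibly augmented by the finitely many auxiliary $K$-objects the argument needs. By GCH in $K$ this family has size $(\kappa^+)^K\leq\kappa^+$, and the measures on $\kappa$ in the $K$-sequence contribute at most $|o^K(\kappa)|\leq\kappa^+$ further sets, so everything fits comfortably under the $\kappa^{++}$ budget of the hypothesis. If $\mathfrak{s}_\kappa^{cl}>\kappa^{++}$, some stationary $x$ is split by no member, and then $U:=\{y\in P(\kappa)\cap K\colon x\subseteq_{cl}^* y\}$ is a genuine $K$-ultrafilter; here the stationary setting even helps, since the clubs witnessing the $NS$-inclusions make $U$ countably complete and normal with respect to $K$-functions (the same Fodor/diagonal-intersection computation as in the proof that $\mathfrak{s}_\kappa^{cl}>\kappa$ characterizes ineffability). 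It is to this $U$ that the covering and comparison analysis of Zapletal and Mitchell is applied, and that analysis is the actual content of the theorem — it cannot be outsourced to the phrase ``Zapletal's computation'' once the derived object has been changed. Relatedly, your explanation of the jump from Zapletal's threshold $\kappa^+$ to the paper's $\kappa^{++}$ (a bookkeeping loss caused by normal$^*$ filters not being closed under diagonal intersections) is speculation and is not what drives the cardinal arithmetic; the relevant counting is of $K$-objects ($(\kappa^+)^K$ subsets of $\kappa$ and $o^K(\kappa)<(\kappa^{++})^K$ measures), not of diagonal intersections.
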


Let us now show some consistency results regarding $\mathfrak{s}_\kappa^{cl}$, $\mathfrak{b}_\kappa$, $\mathfrak{d}_\kappa$ and $\mathfrak{r}_\kappa^{cl}$. First we state a helpful tool:

\begin{lemma} \label{3L2}
Let $V \vDash x \in St$ and let $\mathcal{P}$ be a ${<}\kappa$-closed forcing. Then $V^\mathcal{P} \vDash x \in St$.
\end{lemma}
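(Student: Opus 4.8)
The plan is to show that a ${<}\kappa$-closed forcing $\mathcal{P}$ cannot kill stationarity of a ground-model stationary set $x$. The key point is that ${<}\kappa$-closed forcings add no new ${<}\kappa$-sequences of ordinals, and in particular preserve all clubs of $\kappa$: if $\dot{c}$ is a $\mathcal{P}$-name forced to be a club subset of $\kappa$, then by a standard density/fusion argument one can, below any condition, build a descending $\kappa$-sequence of conditions deciding more and more of $\dot{c}$ (using ${<}\kappa$-closure to pass through limit stages below $\kappa$), and thereby find a ground-model club $d \subseteq \kappa$ with $d \subseteq \dot{c}$ forced by some condition. Actually, what I really need is slightly different: I need that $x$ remains stationary, i.e.\ that $x$ meets every club of the extension.

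So the main step is: let $p \in \mathcal{P}$ and let $\dot{c}$ be a $\mathcal{P}$-name such that $p \Vdash \dot{c} \text{ is club in } \kappa$. I claim $p \Vdash x \cap \dot{c} \neq \emptyset$ — in fact $p$ forces $x \cap \dot{c}$ to be stationary, but nonempty already suffices by genericity applied to the quotient. Fix in $V$ a club $E \subseteq \kappa$ such that for every $\alpha \in E$, the following closure property holds: for every condition $q \leq p$ of ``rank ${<}\alpha$'' in some fixed enumeration and every $\beta < \alpha$, if some extension of $q$ forces an element of $\dot{c}$ above $\beta$, then such an element below $\alpha$ is forced by an extension of $q$. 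This is a routine Löwenheim–Skolem / elementary submodel construction: take $\langle M_\alpha : \alpha < \kappa \rangle$ a continuous increasing chain of elementary submodels of some $H(\theta)$ with $\mathcal{P}, \dot{c}, x, p \in M_0$, $|M_\alpha| < \kappa$, $M_\alpha \cap \kappa \in \kappa$, and let $E = \{ M_\alpha \cap \kappa : \alpha < \kappa \text{ limit}\}$, which is club. Since $x$ is stationary in $V$, pick $\delta = M_\delta \cap \kappa \in x \cap E$. Now working inside $M_\delta$ and using ${<}\kappa$-closure, construct a descending sequence $\langle p_n : n < \omega \rangle$ below $p$ (or of appropriate length $\mathrm{cf}(\delta)$) together with ordinals $\gamma_n \nearrow \delta$ such that $p_n \Vdash \gamma_n \in \dot{c}$; this is possible because each $M_\delta \cap \kappa$-initial segment of $\dot{c}$ is forced to be unbounded in $\delta$ by elementarity (as $\dot{c}$ is forced club and $M_\delta$ reflects this), and ${<}\kappa$-closure lets us take a lower bound $p_\omega$ (resp.\ $p_{\mathrm{cf}(\delta)}$) of the $p_n$. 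Then $p_\omega \Vdash \delta = \sup_n \gamma_n \in \dot{c}$ since $\dot{c}$ is forced closed, and $\delta \in x$, so $p_\omega \Vdash x \cap \dot{c} \neq \emptyset$. As $p$ and $\dot{c}$ were arbitrary, $\Vdash x \in St$.

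The main obstacle is the bookkeeping in the submodel construction: one must ensure that the club $E$ extracted from the chain of submodels genuinely captures enough of the name $\dot{c}$ so that at the chosen point $\delta \in x \cap E$ one can run the ${<}\kappa$-closed fusion entirely ``below $\delta$'' — i.e.\ that the witnesses $\gamma_n$ stay below $\delta$ and the conditions $p_n$ can be found inside $M_\delta$. This is where elementarity of $M_\delta$ (with $\mathcal{P}, \dot c, p \in M_\delta$) combined with ${<}\kappa$-closure does the work: ``$\dot c$ is unbounded in $\kappa$'' reflects to ``below $p$, the name $\dot c$ is forced to be unbounded in $M_\delta \cap \kappa = \delta$'', and closure under sequences of length $\mathrm{cf}(\delta) < \kappa$ provides the lower bound. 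Everything else — that $NS$-sets, clubs, and the forcing relation behave as expected — is standard.
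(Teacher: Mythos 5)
Your overall strategy (meet an arbitrary club name $\dot c$ with $x$ below any condition) is the right one, but the execution has a genuine gap at the point where you run the recursion ``inside $M_\delta$'' for $\delta$ of uncountable cofinality. At successor steps elementarity indeed gives you $q'\leq q$ and $\gamma$ with $q'\Vdash \gamma\in\dot c$ and $q',\gamma\in M_\delta$ (so $\gamma<\delta$), but at an intermediate limit stage $\lambda<\mathrm{cf}(\delta)$ the lower bound of $\langle p_i : i<\lambda\rangle$ supplied by ${<}\kappa$-closure is a condition in $V$ that need not belong to $M_\delta$ (the sequence itself is typically not an element of $M_\delta$, and $M_\delta\cap\mathcal P$ need not be closed), and once you are below a condition outside $M_\delta$ nothing guarantees that further elements of $\dot c$ can be forced \emph{below} $\delta$: such a condition may well force $\dot c\cap\delta$ to be bounded, and the recursion escapes past $\delta$. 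You cannot dodge this by restricting to $\mathrm{cf}(\delta)=\omega$ (where taking the single lower bound at the end is harmless), because the stationary set $x$ may contain no points of countable cofinality at all, e.g.\ $x\subseteq\{\alpha<\kappa : \mathrm{cf}(\alpha)=\omega_1\}$ for $\kappa\geq\omega_2$; and arranging $M_\delta$ to be closed under ${<}\mathrm{cf}(\delta)$-sequences needs cardinal-arithmetic hypotheses the lemma does not assume.

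The standard repair avoids localizing the recursion at $\delta$: before choosing $\delta$, build in $V$ a single decreasing sequence $\langle p_i : i<\kappa\rangle$ below $p$ (lower bounds at all limits $<\kappa$ by ${<}\kappa$-closure) together with an increasing sequence of ordinals $\gamma_i<\kappa$ such that $p_{i+1}\Vdash\gamma_i\in\dot c$ and $\gamma_i$ is above all earlier ordinals; then $E=\{\delta<\kappa : \sup_{i<\delta}\gamma_i=\delta\}$ is club in $V$, so you can pick $\delta\in x\cap E$, and the already-constructed condition $p_\delta$ forces $\dot c\cap\delta$ to be unbounded in $\delta$, hence $\delta\in\dot c\cap x$. (Equivalently, make your recursion canonical and definable from $\mathcal P,\dot c,p$ and a well-order, so that its initial segments automatically lie in $M_\delta$.) This is exactly the technique the paper itself uses in the proof of Lemma \ref{3L1}, where the club name is interpreted along a decreasing $\kappa$-sequence of conditions; for Lemma \ref{3L2} itself the paper instead simply quotes $\Pi^1_1$-absoluteness for ${<}\kappa$-closed forcing, of which the above is the concrete proof.
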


\begin{proof}
Since being stationary is a $\Pi_1^1$ statement, the lemma follows by $\Pi_1^1$-absoluteness for ${<}\kappa$-closed forcing extensions.
\end{proof}

\begin{definition}
Let $\mathcal{U}$ be a ${<}\kappa$-complete, normal ultrafilter on $\kappa$. We define $\mathbb{M}_{\mathcal{U}}$, the generalized Mathias forcing with respect to $\mathcal{U}$, as follows:
\begin{itemize}
\item A condition $p$ is of the form $(s^p,A^p)$ where $s^p \in [\kappa]^{<\kappa}$, $A^p \in \mathcal{U}$ and $\sup s^p \leq \min A^p$.
\item Let $p=(s^p,A^p)$ and $q=(t^q, B^q)$ be in $\mathbb{M}_\mathcal{U}$. We define $q \leq_{\mathbb{M}_\mathcal{U}} p$, in words $q$ is stronger than $p$, if $s^p \subseteq t^q$, $B^q \subseteq A^p$ and $t^q \setminus s^p \subseteq A^p$.
\end{itemize}
If $G$ is a $(V, \mathbb{M}_\mathcal{U})$-generic filter, we define $m_G:= \bigcup_{p \in G} s^p$.
\end{definition}

The next lemma follows immediately.

\begin{lemma}
Let $\mathcal{U}$ be a ${<}\kappa$-complete, normal ultrafilter. Then the forcing $\mathbb{M}_\mathcal{U}$ has the following properties:
\begin{itemize}
\item $\mathbb{M}_\mathcal{U}$ is $\kappa\text{-centered}_{{<}\kappa}$. In particular it is $\kappa^+$-c.c.
\item $\mathbb{M}_\mathcal{U}$ is ${<}\kappa$-directed closed.
\end{itemize}
\end{lemma}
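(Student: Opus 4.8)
The plan is to read both properties off the definitions directly, the engine in each case being the ${<}\kappa$-completeness of $\mathcal{U}$; normality is not actually needed here, and I would only use it implicitly through the fact that a measurable (hence strongly inaccessible) $\kappa$ satisfies $\sum_{\alpha<\kappa} 2^{\vert\alpha\vert} = \kappa$.

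For $\kappa\text{-centered}_{{<}\kappa}$ I would partition $\mathbb{M}_\mathcal{U}$ by stems: for each $s$ put $P_s := \{ p \in \mathbb{M}_\mathcal{U} \colon s^p = s\}$. Since $A^p \in \mathcal{U}$ forces $\sup s^p \leq \min A^p < \kappa$, every stem occurring in a condition is a bounded subset of $\kappa$, so there are only $\kappa$ many nonempty $P_s$. Given $\lambda < \kappa$ and conditions $p_i = (s, A_i) \in P_s$ for $i<\lambda$, the pair $q := (s, \bigcap_{i<\lambda} A_i)$ is a condition, because $\bigcap_{i<\lambda} A_i \in \mathcal{U}$ by ${<}\kappa$-completeness and $\min \bigcap_{i<\lambda} A_i \geq \min A_i \geq \sup s$; moreover $q \leq_{\mathbb{M}_\mathcal{U}} p_i$ for each $i$ (the stems coincide, the $\mathcal{U}$-part only shrinks, and $s \setminus s = \emptyset$). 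Thus each $P_s$ is ${<}\kappa$-centered, giving $\kappa\text{-centered}_{{<}\kappa}$; the $\kappa^+$-c.c.\ is then immediate, since an antichain meets each $P_s$ in at most one point.

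For ${<}\kappa$-directed closure, let $D = \{ p_i \colon i<\lambda\}$ with $\lambda<\kappa$ be directed, write $p_i = (s_i, A_i)$, and set $s := \bigcup_{i<\lambda} s_i$ and $A := \bigcap_{i<\lambda} A_i$. Then $s \in [\kappa]^{<\kappa}$ by regularity of $\kappa$ and $A \in \mathcal{U}$ by ${<}\kappa$-completeness. I claim $(s,A)$ is a condition below all $p_i$. The only clause that genuinely uses directedness is $s \setminus s_i \subseteq A_i$: for $\alpha \in s \setminus s_i$ pick $j$ with $\alpha \in s_j$ and then $r \in D$ with $r \leq_{\mathbb{M}_\mathcal{U}} p_i, p_j$, so that $\alpha \in s_j \subseteq s^r$ while $\alpha \notin s_i$, whence $\alpha \in s^r \setminus s_i \subseteq A_i$. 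Finally $\sup s \leq \min A$ holds because any $\alpha \in s$ lies in some $s_j$, so $\alpha \leq \sup s_j \leq \min A_j$, and $A \subseteq A_j$ gives $\min A \geq \min A_j \geq \alpha$. Hence $(s,A) \leq_{\mathbb{M}_\mathcal{U}} p_i$ for all $i$, as desired.

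I do not expect a real obstacle. The point deserving a little care is the clause $s \setminus s_i \subseteq A_i$ in the directed-closure step, which is precisely where one must invoke directedness of $D$ (a chain, or an arbitrary ${<}\kappa$-sized set, would not suffice); and in the centeredness step one should check that the number of stems is indeed $\kappa$, which is where strong inaccessibility of $\kappa$ enters.
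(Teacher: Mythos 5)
Your proof is correct, and it is exactly the standard argument the paper leaves implicit (the lemma is stated with ``the next lemma follows immediately'' and no proof): stems are bounded subsets of $\kappa$, inaccessibility of the measurable $\kappa$ gives only $\kappa$ many of them, ${<}\kappa$-completeness of $\mathcal{U}$ yields both the centering of each stem-class and the closure, and directedness is invoked precisely for the clause $s \setminus s_i \subseteq A_i$. One minor quibble with your closing remark: a chain \emph{is} a directed set, so chains would in fact suffice for that step (if $p_j \leq p_i$ then $s_j \setminus s_i = \emptyset$, and if $p_i \geq p_j$ is reversed the inclusion is immediate); what genuinely fails is an arbitrary ${<}\kappa$-sized, not necessarily directed, family.
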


\begin{lemma} \label{3L1}
Let $\mathcal{U}$ be a ${<}\kappa$-complete, normal ultrafilter on $\kappa$ and let $V \vDash x \in St$. Then $ \Vdash_{\mathbb{M}_\mathcal{U}} \dot{m_G} \in St \land (\dot{m_G} \subseteq_{cl}^* x  \vee \dot{m_G} \cap x \in NS)$.
\end{lemma}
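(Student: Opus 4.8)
The plan is to analyze the generic real $m_G$ produced by $\mathbb{M}_\mathcal{U}$ and show two things: first that $m_G$ is stationary in $V^{\mathbb{M}_\mathcal{U}}$, and second that it ``decides'' $x$ in the sense that either $m_G \subseteq_{cl}^* x$ or $m_G \cap x \in NS$. The key observation driving both parts is that $\mathcal{U}$ measures $x$: either $x \in \mathcal{U}$ or $\kappa \setminus x \in \mathcal{U}$. In the first case every condition can be strengthened (keeping the stem, shrinking the $\mathcal{U}$-part to lie inside $x$) to one forcing $\dot m_G \setminus x$ to be bounded, hence to force $\dot m_G \subseteq^*_{cl} x$; in the second case symmetrically one forces $\dot m_G \cap x$ bounded, hence $\dot m_G \cap x \in NS$. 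So the disjunction is really decided outright by whichever of $x, \kappa\setminus x$ lies in $\mathcal{U}$, and the ``$\vee$'' in the statement is a convenience.

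For stationarity of $\dot m_G$, I would argue in $V$ that $\Vdash \dot m_G \in St$ by a standard genericity/fusion argument: given a name $\dot C$ for a club and a condition $p=(s^p,A^p)$, I build a descending $<\kappa$-sequence of conditions (using $<\kappa$-directed closure of $\mathbb{M}_\mathcal{U}$ and $\kappa^+$-c.c., or more concretely a fusion along the normal measure) whose stems form an increasing continuous sequence, arranging that the stems repeatedly meet the forced values of $\dot C$; at a limit stage of cofinality reflecting appropriately the union of the stems is a point of $\dot C$ that lies in $\dot m_G$. Because $\mathcal{U}$ is normal, the diagonal intersection of the $A$-parts stays in $\mathcal{U}$, so the fusion condition is legitimate and forces this limit point into $\dot m_G \cap \dot C$. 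This gives $\dot m_G \in St$; alternatively, one can cite that $\mathbb{M}_\mathcal{U}$ adds a club subset of $\{\alpha : \alpha \in m_G\ \text{or}\ \alpha\ \text{is a limit of}\ m_G\}$-type sets and invoke $\Pi^1_1$-absoluteness via Lemma \ref{3L2} only after checking $\mathbb{M}_\mathcal{U}$ is $<\kappa$-closed, which it is.

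Then, to finish: work below a fixed condition $p$. Since $\mathcal{U}$ is an ultrafilter, either $x \in \mathcal{U}$ or $\kappa \setminus x \in \mathcal{U}$. Suppose $x \in \mathcal{U}$. Then $p' := (s^p, A^p \cap x)$ is a condition extending $p$, and below $p'$ every further condition has its $\mathcal{U}$-part inside $x$, so $\dot m_G \setminus s^p \subseteq x$; thus $p' \Vdash \dot m_G \setminus x \subseteq s^p$, and since $s^p \in [\kappa]^{<\kappa}$ is bounded below $\kappa$ (which is inaccessible, hence the bounded ideal refines $NS$), we get $p' \Vdash \dot m_G \setminus x \in NS$, i.e. $p' \Vdash \dot m_G \subseteq^*_{cl} x$. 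Symmetrically, if $\kappa\setminus x \in \mathcal{U}$, then $(s^p, A^p \cap (\kappa\setminus x))$ forces $\dot m_G \cap x \subseteq s^p$, hence $\dot m_G \cap x \in NS$. Since every condition has an extension forcing one of the two disjuncts, and we have already shown $\Vdash \dot m_G \in St$, we conclude $\Vdash_{\mathbb{M}_\mathcal{U}} \dot m_G \in St \land (\dot m_G \subseteq^*_{cl} x \vee \dot m_G \cap x \in NS)$.

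The main obstacle is the stationarity claim $\Vdash \dot m_G \in St$: the decision-of-$x$ part is essentially bookkeeping, but proving that the generic real remains stationary requires the fusion/genericity argument against club names, and one must be careful that the limit points picked up by the fusion genuinely land in $\dot m_G$ — this is exactly where normality of $\mathcal{U}$ (ensuring diagonal intersections stay measure one) is used, and where one must verify the stems can be driven to be continuous at the relevant limit stages.
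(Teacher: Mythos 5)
The half of your argument dealing with the disjunction is correct and is exactly what the paper does: $\mathcal{U}$ measures $x$, and shrinking the measure-one part of a condition into $x$ (resp.\ $\kappa\setminus x$) forces $\dot m_G\setminus x$ (resp.\ $\dot m_G\cap x$) to be bounded, hence non-stationary.

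The stationarity claim, however --- which you yourself flag as the main obstacle --- is where your sketch has a genuine gap. Your plan is to grow the stems continuously, meeting decided values of $\dot C$, and at a limit stage to declare that the supremum $\delta$ of the stems lies in $\dot m_G\cap\dot C$, the ``fusion condition'' being legitimate because the diagonal intersection of the $A$-parts is in $\mathcal{U}$. But $\delta\in\dot m_G$ can only be forced by putting $\delta$ into a stem, and that requires $\delta\in A^{p_i}$ for the earlier conditions $p_i$; the $A$-parts are arbitrary measure-one sets, not closed, so the supremum of the stems need not lie in any of them, and normality does not help with this particular, pre-determined point: $\triangle_i A^{p_i}\in\mathcal{U}$ tells you there are many good points, not that $\delta$ is one of them. (Moreover, a condition whose measure-one part is the diagonal intersection is not a lower bound of the sequence, since $\triangle_i A^{p_i}\not\subseteq A^{p_i}$ in general, and over the full length $\kappa$ no lower bound can exist once the stems are unbounded; the $\kappa^+$-c.c.\ plays no role here, and the aside invoking Lemma \ref{3L2}/$\Pi^1_1$-absoluteness cannot apply to the newly added set $\dot m_G$.) The paper's proof avoids exactly this trap by reversing the order of quantification: build a decreasing $\kappa$-sequence $(p_i)_{i<\kappa}$ below $p$ interpreting $\dot{cl}$ as a ground-model club $cl^*$, with infima at limits; set $A^*:=\triangle_{i<\kappa}A^{p_i}$, which is in $\mathcal{U}$ by normality, so $A^*\cap\mathrm{Lim}(cl^*)\neq\emptyset$ (a normal measure contains all clubs); and only now choose $i^*$ in this intersection. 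Then $i^*\in A^{p_{i^*}}$ by the definition of the diagonal intersection together with the limit condition $p_{i^*}=\inf_{i<i^*}p_i$, one has $p_{i^*}\Vdash i^*\in\dot{cl}$ since $p_{i^*}$ already forces cofinally many points of $cl^*\cap i^*$ into $\dot{cl}$, and one extends the single condition $p_{i^*}$ (not the whole sequence) to $q=(s^{p_{i^*}}\cup\{i^*\},A^{p_{i^*}}\setminus(i^*+1))$, which forces $i^*\in\dot m_G\cap\dot{cl}$. So the witness is chosen from a large set after the construction, rather than being the limit of the stems fixed in advance; as written, your fusion cannot be completed.
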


\begin{proof}
If $x \in \mathcal{U}$ then clearly $\Vdash_{\mathbb{M}_\mathcal{U}} \dot{m}_G \subseteq^* x$. On the other hand, if $x \notin \mathcal{U}$ then $\Vdash_{\mathbb{M}_\mathcal{U}} \dot{m}_G \cap x \,\, \text{is bounded}$. Hence, it remains to be shown that $\Vdash_{\mathbb{M}_\mathcal{U}} \dot{m}_G \in St$. To this end let $ p \in \mathbb{M}_\mathcal{U}$ and $\dot{cl}$ be a $\mathbb{M}_\mathcal{U}$-name for a club. Let $(p_i)_{i < \kappa}$ be a  decreasing sequence of conditions below $p$ interpreting $\dot{cl}$ as $cl^* \in V$, and w.l.o.g assume that $p_\lambda= \inf_{i< \lambda} p_i$ for every limit $\lambda < \kappa$. Let $A^*:=\triangle_{i< \kappa}\, A^{p_i}$ denote the diagonal intersection of the $A^{p_i}$, and since $\mathcal{U}$ is a normal measure, we have that $A^* \in \mathcal{U}$. Hence, $A^* \cap \text{Lim}(cl^*) \neq \emptyset$ where $\text{Lim}(cl^*)$ is the club consisting only of the limit points of $cl^*$, and pick $i^* \in A^* \cap cl^*$. It follows that $i^* \in A^{p_{i^*}}$ and $p_{i^*} \Vdash_{\mathbb{M}_\mathcal{U}} i^* \in \dot{cl}$. If we define a condition $q:=(s^{p_{i^*}} \cup \{i^*\}, A^{p_{i^*}} \setminus \{i^*\})$ then trivially $q \leq_{\mathbb{M}_\mathcal{U}} p_{i^*}$ and $q \Vdash_{\mathbb{M}_\mathcal{U}} i^* \in \dot{m}_G \cap \dot{cl}$. Hence, $\Vdash_{\mathbb{M}_\mathcal{U}} \dot{m}_G \in St$.
\end{proof}

\begin{theorem}
Let $\kappa$ be supercompact and indestructible by ${<}\kappa$-directed closed forcing posets (see \cite{Laver2}). Define a ${<}\kappa$-support iteration $( \mathbb{P}_\alpha, \dot{\mathbb{Q}}_\beta \colon \alpha \leq \kappa^{++}, \beta < \kappa^{++})$ such that $\Vdash_{\mathbb{P}_\alpha} \dot{\mathbb{Q}}_\alpha = \mathbb{M}_{\dot{\mathcal{U}}_\alpha}$ where $\dot{\mathcal{U}}_\alpha$ is a $\mathbb{P}_\alpha$-name for a ${<}\kappa$-complete, normal ultrafilter. Furthermore, assume that $V \vDash 2^\kappa = \kappa^+$. Then $V^\mathbb{P} \vDash \mathfrak{s}_\kappa^{cl}=\mathfrak{b}_\kappa= \mathfrak{d}_\kappa = \mathfrak{r}_\kappa^{cl}= 2^\kappa= \kappa^{++}$.
\end{theorem}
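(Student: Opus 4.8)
The plan is to run the standard ``master argument'' for generalized Mathias iterations, establishing each of the four cardinal equalities by combining (a) upper bounds coming from a bookkeeping/reflection argument over the length $\kappa^{++}$ iteration, and (b) lower bounds coming from genericity of the added Mathias subsets. First I would record the preservation facts: since each iterand $\mathbb{M}_{\dot{\mathcal{U}}_\alpha}$ is ${<}\kappa$-directed closed and $\kappa^+$-c.c., the whole iteration $\mathbb{P}=\mathbb{P}_{\kappa^{++}}$ is ${<}\kappa$-directed closed and $\kappa^+$-c.c.; in particular $\kappa$ stays regular, no bounded subsets of $\kappa$ are added, cardinals and cofinalities are preserved, and by Lemma~\ref{3L2} every ground-model stationary set remains stationary. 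Standard $\Delta$-system/nice-name counting with $2^\kappa=\kappa^+$ in $V$ gives $|\mathbb{P}|=\kappa^{++}$ and $V^{\mathbb{P}}\vDash 2^\kappa=\kappa^{++}$, which is the common upper bound for all four characteristics (each of them is trivially ${\leq}2^\kappa$; for $\mathfrak{b}_\kappa,\mathfrak{d}_\kappa$ one uses that $\kappa^\kappa$ has size $2^\kappa$). Also note $\kappa$ remains supercompact, hence measurable, so the characteristics $\mathfrak{s}_\kappa^{cl},\mathfrak{r}_\kappa^{cl},\mathfrak{b}_\kappa,\mathfrak{d}_\kappa$ are all $\geq\kappa$ (Theorems~\ref{3T1},~\ref{3T4} and the trivial bound $\mathfrak{b}_\kappa\geq\kappa$, $\mathfrak{d}_\kappa\geq\kappa$); the real content is pushing them up to $\kappa^{++}$.

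For the lower bound $\mathfrak{s}_\kappa^{cl}\geq\kappa^{++}$: given a putative stationary splitting family $\mathcal{S}$ of size $\kappa^+$ in $V^{\mathbb{P}}$, use the $\kappa^+$-c.c. and ${<}\kappa$-directed closure to find (by a reflection argument, passing to a suitable intermediate submodel / using that $\mathbb{P}_{\kappa^{++}}$ is a ``nice'' iteration) an $\alpha<\kappa^{++}$ with $\mathcal{S}\in V^{\mathbb{P}_\alpha}$ — here one needs that every $\kappa^+$-sized family of subsets of $\kappa$ appears at some stage below $\kappa^{++}$, which follows from $\kappa^+$-c.c.\ plus $\mathrm{cf}(\kappa^{++})=\kappa^{++}>\kappa^+$ and the tail of the iteration being ${<}\kappa$-directed closed (hence adding no new $\kappa$-sequences over a further tail). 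Then the Mathias real $m_{G_\alpha}$ added by $\dot{\mathbb{Q}}_\alpha$ satisfies, by Lemma~\ref{3L1}, for every $x\in\mathcal{S}$ either $m_{G_\alpha}\subseteq_{cl}^*x$ or $m_{G_\alpha}\cap x\in NS$, i.e.\ no $x\in\mathcal{S}$ stationarily splits the stationary set $m_{G_\alpha}$ — and $m_{G_\alpha}$ stays stationary in the full extension since the tail forcing is ${<}\kappa$-closed (Lemma~\ref{3L2}). This contradicts $\mathcal{S}$ being splitting, so $\mathfrak{s}_\kappa^{cl}\geq\kappa^{++}$. For $\mathfrak{r}_\kappa^{cl}\geq\kappa^{++}$ one argues completely dually: any reaping family $\mathcal{R}$ of size $\kappa^+$ lies in some $V^{\mathbb{P}_\alpha}$, and then $m_{G_\alpha}$ stationarily splits every $y\in\mathcal{R}$ — one must check this, which is where the normality of $\dot{\mathcal{U}}_\alpha$ and the structure of $\mathbb{M}_{\mathcal{U}}$ enter (the generic real meets both $y$ and $\kappa\setminus y$ stationarily whenever $y$ is stationary, via the same density/diagonal-intersection argument as in Lemma~\ref{3L1}, applied to $y$ and its complement separately; when $y\in\mathcal{U}_\alpha$ one instead splits using that $m_{G_\alpha}$ is a ``thin'' stationary subset of $\kappa$ inside $y$ whose complement in $y$ is still stationary). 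For $\mathfrak{b}_\kappa\geq\kappa^{++}$ and $\mathfrak{d}_\kappa\leq\kappa^{++}$: $\mathfrak{d}_\kappa\leq 2^\kappa=\kappa^{++}$ is the upper bound already noted, and $\mathfrak{b}_\kappa\geq\kappa^{++}$ follows because each $\mathbb{M}_{\mathcal{U}_\alpha}$ adds a dominating function over $V^{\mathbb{P}_\alpha}$ (the increasing enumeration of $m_{G_\alpha}$ dominates all ground-model functions, by a routine density argument using that $A^p\in\mathcal{U}_\alpha$ can be shrunk below any given function), so any family of size $\kappa^+$ in $\kappa^\kappa$ appears by some stage $\alpha$ and is then dominated by the later Mathias function; dually this also gives $\mathfrak{b}_\kappa\leq\mathfrak{d}_\kappa\leq\kappa^{++}$ and hence all of $\mathfrak{b}_\kappa=\mathfrak{d}_\kappa=\kappa^{++}$. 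Finally assemble: $\kappa^{++}=2^\kappa\geq\mathfrak{d}_\kappa\geq\mathfrak{b}_\kappa\geq\kappa^{++}$, and $\kappa^{++}\geq\mathfrak{s}_\kappa^{cl},\mathfrak{r}_\kappa^{cl}\geq\kappa^{++}$, giving equality throughout.

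The main obstacle I expect is the reflection step — proving cleanly that every relevant object of size $\kappa^+$ (a splitting/reaping family, or a $\kappa^+$-sized set of functions) already belongs to some proper initial segment $V^{\mathbb{P}_\alpha}$ with $\alpha<\kappa^{++}$. This is where the precise chain-condition and closure bookkeeping matters: one needs $\mathbb{P}_{\kappa^{++}}$ to be (equivalent to) a direct limit at stage $\kappa^{++}$ of a system in which ${<}\kappa$-sequences, and $\kappa$-sized sets thereof, are caught at stages of cofinality $>\kappa$ — using that the iteration has ${<}\kappa$-support so that any single condition has support of size $<\kappa$, any $\kappa^+$-sized antichain (hence any name for a $\kappa$-sequence, and any $\kappa^+$-sized family of such) has support of size $\leq\kappa^+<\kappa^{++}=\mathrm{cf}(\kappa^{++})$, and therefore is absorbed below $\kappa^{++}$. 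A secondary technical point is verifying that the generic real $m_{G_\alpha}$, which is stationary and dominating over $V^{\mathbb{P}_\alpha}$, retains these properties in $V^{\mathbb{P}}$: stationarity is Lemma~\ref{3L2} applied to the ${<}\kappa$-closed tail $\mathbb{P}_{[\alpha+1,\kappa^{++})}$, and ``dominating''/``unsplit'' are preserved since that tail adds no new functions $\kappa\to\kappa$ and no new subsets of $\kappa$ (being ${<}\kappa$-closed does not add bounded sets, and one must additionally note it adds no new $\kappa$-sequences over $V^{\mathbb{P}_{\alpha+1}}$ below a suitable further point, or simply that the relevant statements are $\Pi_1^1$ over $(\kappa,{\in})$ and hence absolute).
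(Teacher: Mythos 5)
Most of your proposal coincides with the paper's argument: nice-name counting for $2^\kappa=\kappa^{++}$, catching any $\kappa^+$-sized family of subsets of $\kappa$ (or of functions) at some stage $\alpha<\kappa^{++}$ via the $\kappa^+$-c.c.\ and ${<}\kappa$-supports, using Lemma~\ref{3L1} to see that the Mathias generic $m_{G_\alpha}$ is a stationary set which no member of the caught family stationarily splits, and Lemma~\ref{3L2} (together with upward absoluteness of ``$m\setminus x\in NS$'' and ``$m\cap x\in NS$'' for the fixed sets involved) to transfer this to $V^{\mathbb{P}}$; likewise $\mathfrak{b}_\kappa=\mathfrak{d}_\kappa=\kappa^{++}$ via the dominating Mathias functions. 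A minor slip there: the ${<}\kappa$-closed tail certainly \emph{does} add new subsets of $\kappa$ and new functions $\kappa\to\kappa$; what saves the argument is not that, but that the relevant relations for fixed objects are upward absolute, stationarity being the one point that needs Lemma~\ref{3L2} --- your alternative absoluteness remark is the correct one.

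The genuine gap is your treatment of $\mathfrak{r}_\kappa^{cl}$. You claim that $m_{G_\alpha}$ stationarily splits \emph{every} stationary $y\in V^{\mathbb{P}_\alpha}$, ``by the same density/diagonal-intersection argument applied to $y$ and its complement separately''. This is false: $\mathcal{U}_\alpha$ is an ultrafilter, so if $y\notin\mathcal{U}_\alpha$ then $\kappa\setminus y\in\mathcal{U}_\alpha$, every condition can be strengthened so that its measure-one part avoids $y$, and hence it is forced that $m_{G_\alpha}\cap y$ is bounded --- this is exactly the second alternative in Lemma~\ref{3L1} --- so $m_{G_\alpha}$ does not split $y$ at all. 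The diagonal-intersection argument only produces points in $A^*\cap cl^*$ with $A^*\in\mathcal{U}_\alpha$ and cannot be made to land inside a set outside $\mathcal{U}_\alpha$; your parenthetical hedge treats only the unproblematic case $y\in\mathcal{U}_\alpha$, whereas a reaping family will in general contain sets not in $\mathcal{U}_\alpha$ (two disjoint stationary sets cannot both lie in it). So the ``dual'' Mathias argument collapses. The paper proves $\mathfrak{r}_\kappa^{cl}=\kappa^{++}$ by a different device which you never invoke: since the iteration has ${<}\kappa$-support, it adds $\kappa$-Cohen generics cofinally often, and by Lemma~\ref{3L3} a $\kappa$-Cohen real stationarily splits every ground-model stationary set; catching a candidate reaping family of size $\le\kappa^+$ at a stage and then using a later Cohen generic, plus Lemma~\ref{3L2}, finishes the proof. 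Your proposal needs this (or some substitute) to be complete.
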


\begin{proof}
Since $\mathbb{P}$ satisfies the $\kappa^+$-c.c. and for every $\alpha < \kappa^{++}$ the forcing $\mathbb{P}_\alpha$ has a dense subset of size $\kappa^+$, we can deduce that $V^\mathbb{P}\vDash 2^\kappa= \kappa^{++}$. It is easy to see that $V^\mathbb{P}\vDash\mathfrak{b}_\kappa= \mathfrak{d}_\kappa =\kappa^{++}$. Since $\mathbb{P}$ has ${<}\kappa$-support, it follows that $\mathbb{P}$ adds $\kappa$-Cohen reals, hence $V^\mathbb{P}\vDash\mathfrak{r}_\kappa^{cl}=\kappa^{++}$ (see Lemma \ref{3L3}). Now if $V^\mathbb{P} \vDash X\subseteq St$ is a set of size $\leq \kappa^+$ in $V^\mathbb{P}$, then by the $\kappa^+$-c.c. there exists $\alpha < \kappa^{++}$ such that $X \in V^{\mathbb{P}_\alpha}$ and by downward absoluteness $V^{\mathbb{P}_\alpha} \vDash X \subseteq St$. By Lemma \ref{3L1} $X$ is not a stationary splitting family in $V^{\mathbb{P}_{\alpha+1}}$, hence by Lemma \ref{3L2} $X$ cannot be a stationary splitting family in $V^\mathbb{P}$.
\end{proof}

\begin{lemma} \label{3L3}
Let $\mathbb{C}_\kappa$ denote the $\kappa$-Cohen forcing and let $G$ be a $(V, \mathbb{C}_\kappa)$-generic filter. Let $c_G \subseteq \kappa$ denote the $\kappa$-Cohen real added by $G$, and let $V \vDash x \in St$. Then $V^{\mathbb{C}_\kappa} \vDash c_G \,\, \text{stationarily splits}\,\, x$.\\
Furthermore, if $\mathbb{P}=\prod_{\alpha < \kappa^+} \mathbb{C}_\kappa$ denotes the ${<}\kappa$-support product of $\kappa$-Cohen forcing, then $V^\mathbb{P} \vDash (c_\alpha)_{\alpha< \kappa^+}\,\, \text{is a stationary splitting family}$.
\end{lemma}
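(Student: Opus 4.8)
The plan is to prove the two assertions by essentially the same density argument, so I would first treat the single $\kappa$-Cohen real and then explain how to run the product version in parallel.

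For the first part, fix $V \vDash x \in St$, a condition $p \in \mathbb{C}_\kappa$ (i.e. $p \colon \alpha \to 2$ for some $\alpha < \kappa$), and a $\mathbb{C}_\kappa$-name $\dot{cl}$ for a club. I want to find $q \leq p$ forcing that $c_G \cap x \cap \dot{cl} \neq \emptyset$; by symmetry (replacing ``$c_G$'' by its complement, which is also a valid way to extend any condition) the same argument gives $q' \leq p$ forcing $(x \setminus c_G) \cap \dot{cl} \neq \emptyset$, and since $p$ and $\dot{cl}$ were arbitrary this shows $\Vdash c_G$ stationarily splits $x$. To find $q$: build a decreasing sequence $(p_i)_{i < \kappa}$ below $p$, together with an increasing continuous sequence of ordinals $(\gamma_i)_{i<\kappa}$ with $\gamma_i = \mathrm{dom}(p_i)$, such that $p_{i+1}$ decides some ordinal $\delta_i \in \dot{cl}$ with $\delta_i > \gamma_i$, and such that $\sup_i \gamma_i = \kappa$ with each $\gamma_i < \kappa$ (possible since $\mathbb{C}_\kappa$ is ${<}\kappa$-closed, using $\kappa$ regular). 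Since ${<}\kappa$-closed forcing adds no new clubs (Lemma \ref{3L2}-style $\Pi^1_1$-absoluteness, or directly), the set $\{\gamma_i : i < \kappa\}$ is a club in $V$, so it meets $\mathrm{Lim}(x)$ — more carefully, the set of $i$ with $\gamma_i \in x$ is unbounded. Pick such an $i$ which is moreover a limit, so $\gamma_i = \sup_{j<i}\gamma_j$ and $p_i = \bigcup_{j<i} p_j$ has domain $\gamma_i$ and forces some $\delta \in \dot{cl}$ with $\delta < \gamma_i$ cofinally...

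Let me restate the construction more cleanly: interleave so that $p_{i+1}$ forces ``$\dot{cl} \cap \gamma_i$ is cofinal in $\gamma_i$'' along the limit stages (this is automatic since $\dot{cl}$ is forced to be club and $\gamma_i$ is a limit of the $\gamma_j$). Then at a limit $i$ with $\gamma_i \in x$, the condition $p_i$ (of domain $\gamma_i$) forces $\gamma_i \in \dot{cl}$, because $\gamma_i$ is a limit of elements of $\dot{cl}$. Now extend $p_i$ to $q$ by setting $q(\gamma_i) = 1$ (and $q = p_i$ below $\gamma_i$): then $q \Vdash \gamma_i \in c_G \cap x \cap \dot{cl}$. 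Taking instead $q(\gamma_i) = 0$ handles the other side. This completes the first part.

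For the product $\mathbb{P} = \prod_{\alpha<\kappa^+}\mathbb{C}_\kappa$ with ${<}\kappa$-support: $\mathbb{P}$ is still ${<}\kappa$-closed, hence preserves stationarity of ground-model sets by Lemma \ref{3L2}. Given $x \in St$ in $V^{\mathbb{P}}$, first note (since $\mathbb{P}$ is ${<}\kappa$-closed and, more importantly, by the usual $\Delta$-system / chain-condition bookkeeping for ${<}\kappa$-support products of small forcings) that $x$ together with any name for a club can be captured using only $\kappa$-many coordinates, so it lies in an intermediate extension $V^{\mathbb{P}\restriction I}$ for some $I \in [\kappa^+]^{\leq \kappa}$; then for any $\alpha \notin I$ the tail factor $\mathbb{C}_\kappa$ at coordinate $\alpha$ is, over $V^{\mathbb{P}\restriction I}$, just $\kappa$-Cohen forcing, and the rest of the product is ${<}\kappa$-closed over it — so by the first part $c_\alpha$ stationarily splits $x$, and this survives the remaining ${<}\kappa$-closed forcing by Lemma \ref{3L2} again. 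Hence every $x \in St$ in the final model is stationarily split by some (indeed, all but $\leq\kappa$ of the) $c_\alpha$, so $(c_\alpha)_{\alpha<\kappa^+}$ is a stationary splitting family.

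The main obstacle is the genericity/absoluteness bookkeeping in both parts: in part one, making sure the diagonal-type argument with $\mathrm{Lim}(cl^*)$ goes through requires care that the club of $\gamma_i$'s really meets $x$ at a \emph{limit} stage where the partial condition already forces membership in $\dot{cl}$ — this is the analogue of the ``$i^* \in A^* \cap cl^*$'' step in the proof of Lemma \ref{3L1}, and as there, the key point is that a ${<}\kappa$-closed forcing reflects clubs down to the ground model. In part two the subtlety is the factorization of the ${<}\kappa$-support product into ``$\kappa$-many coordinates (absorbing $x$) $\times$ one Cohen coordinate $\times$ ${<}\kappa$-closed tail''; once that is set up correctly the reduction to part one is immediate.
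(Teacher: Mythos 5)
Your proof is correct and follows essentially the same route as the paper's: build a decreasing sequence of conditions interpreting the club name, use stationarity of $x$ in $V$ to find a limit stage whose supremum lies in $x$ and is forced into $\dot{cl}$, then extend the Cohen condition two ways to put that point into or out of $c_G$; and for the product, capture the name for $x$ on ${\leq}\kappa$ coordinates (the paper does this via the $\kappa^+$-c.c., picking an initial segment $\mathbb{P}_\alpha$), apply the one-step case over the intermediate model, and preserve stationarity through the remaining ${<}\kappa$-closed part via Lemma \ref{3L2}. One caveat: your side remarks that ${<}\kappa$-closed forcing ``adds no new clubs'' or ``reflects clubs down to the ground model'' are false in general, but they are not actually needed, since the club $\{\gamma_i \colon i<\kappa\}$ is constructed in $V$, which is all your cleaned-up argument uses.
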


\begin{proof}
We proceed similarly to the proof of \ref{3L1}: Let $p \in \mathbb{C}_\kappa$ and $\dot{cl}$ a $\mathbb{C}_\kappa$-name for a club. Let $(p_i)_{i< \kappa}$ be a decreasing sequence below $p$ interpreting $\dot{cl}$ as $cl^* \in Cl \cap V$. Again, w.l.o.g. assume that $p_\lambda= \inf_{i< \lambda} p_i$ for every limit $\lambda < \kappa$. Since $x$ is stationary in $V$, we can find $i^* \in x \cap \text{Lim}(cl^*)$ where $\text{Lim}(cl^*)$ is again the club consisting only of the limit points of $cl^*$. Hence, there are $q_0, q_1 \in \mathbb{C}_\kappa$ below $p_{i^*}$ such that $q_0 \Vdash_{\mathbb{C}_\kappa} (x \setminus c_{\dot{G}}) \cap \dot{cl} \neq \emptyset$ and $q_1 \Vdash_{\mathbb{C}_\kappa} x \cap c_{\dot{G}} \cap \dot{cl} \neq \emptyset$.\\
Let $\dot{x}$ be a $\mathbb{P}$-name for a stationary set in $V^\mathbb{P}$. By the $\kappa^+$-c.c. of $\mathbb{P}$ it follows that there exists $\alpha < \kappa^+$ such that $\dot{x}$ is a $\mathbb{P}_\alpha$-name, where $\mathbb{P}_\alpha:= \prod_{\beta < \alpha} \mathbb{C}_\kappa$. By the above $V^{\mathbb{P}_{\alpha+1}} \vDash c_\alpha \,\, \text{stationarily splits} \,\, x$. By Lemma \ref{3L2} we have $V^\mathbb{P} \vDash c_\alpha \,\, \text{stationarily splits} \,\, x$.
\end{proof}

The following proof already appeared in a similar version in \cite{207}:

\begin{theorem} \label{3T3}
Let $\kappa$ be supercompact and indestructible by ${<}\kappa$-directed closed forcing posets. Let $V\vDash 2^\kappa = \kappa^+$ and define $\mathbb{R}:=\mathbb{P} \star \dot{\mathbb{Q}}$ where $\mathbb{P}=\prod_{\alpha < \kappa^+} \mathbb{C}_\kappa$ and $\dot{\mathbb{Q}}$ is a $\mathbb{P}$-name for a $\kappa^{++}$ iteration of $\kappa$-Hechler forcing $\mathbb{H}_\kappa$ with ${<}\kappa$-support. Then $V^\mathbb{R}\vDash \mathfrak{s}_\kappa^{cl} = \kappa^+ \land \mathfrak{b}_\kappa = \mathfrak{d}_\kappa=\mathfrak{r}_\kappa^{cl}= 2^\kappa= \kappa^{++}$.
\end{theorem}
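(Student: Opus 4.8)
The plan is to verify the four cardinal equalities in the model $V^{\mathbb{R}}$ one at a time, reusing the machinery already established. The factor $\mathbb{P}=\prod_{\alpha<\kappa^+}\mathbb{C}_\kappa$ is ${<}\kappa$-closed (indeed ${<}\kappa$-directed closed), so $\kappa$ remains supercompact after forcing with $\mathbb{P}$, and in $V^{\mathbb{P}}$ the hypothesis of indestructibility is still available for the ${<}\kappa$-support Hechler iteration $\dot{\mathbb{Q}}$, which is ${<}\kappa$-directed closed as well; hence $\kappa$ stays inaccessible (in fact measurable) throughout, so all the relevant characteristics are well-defined and Theorem \ref{3T4} applies. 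First I would compute $2^\kappa$: since $\kappa^{<\kappa}=\kappa$ in $V$, the product $\mathbb{P}$ has size $\kappa^+$ and is $\kappa^+$-c.c., so $V^{\mathbb{P}}\vDash 2^\kappa=\kappa^+$; then the $\kappa^{++}$-iteration of $\kappa$-Hechler forcing with ${<}\kappa$-support is $\kappa^+$-c.c.\ and (by a nice-name count, each factor having a dense subset of size $\kappa^+$) forces $2^\kappa=\kappa^{++}$. This simultaneously gives $\mathfrak{b}_\kappa=\mathfrak{d}_\kappa=\kappa^{++}$: the Hechler iteration adds a scale of cofinally many dominating functions, so $\mathfrak{b}_\kappa=\mathfrak{d}_\kappa=\kappa^{++}$ by the standard argument (any ${<}\kappa^{++}$-sized family of functions appears at some intermediate stage $\alpha<\kappa^{++}$ by $\kappa^+$-c.c.\ and is dominated by the Hechler generic added at stage $\alpha$).

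Next I would handle $\mathfrak{r}_\kappa^{cl}=\kappa^{++}$. The point is that the reaping family must avoid being stationarily split, but the $\kappa$-Cohen product $\mathbb{P}$ already adds a stationary splitting family of size $\kappa^+$ by Lemma \ref{3L3}, and moreover these Cohen reals survive the remaining Hechler iteration: since $\dot{\mathbb{Q}}$ is ${<}\kappa$-closed over $V^{\mathbb{P}}$, Lemma \ref{3L2} shows every relevant set stays stationary, and a density argument shows each $c_\alpha$ continues to stationarily split every stationary set of the full extension. Concretely, given $V^{\mathbb{R}}\vDash\mathcal{R}\subseteq St$ with $|\mathcal{R}|\le\kappa^+$, by $\kappa^+$-c.c.\ of $\mathbb{R}$ there is $\alpha<\kappa^+$ with $\mathcal{R}\in V^{\mathbb{P}_\alpha\star\dot{\mathbb{Q}}}$ (where $\mathbb{P}_\alpha=\prod_{\beta<\alpha}\mathbb{C}_\kappa$), and $c_\alpha$ — added generically over this model and splitting stationarily there — still splits every element of $\mathcal{R}$ in $V^{\mathbb{R}}$, so $\mathcal{R}$ is not a reaping family; combined with Theorem \ref{3T4} this yields $\mathfrak{r}_\kappa^{cl}=\kappa^{++}$. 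This step requires checking that the splitting property of a Cohen real is preserved by the later ${<}\kappa$-closed Hechler forcing, which is exactly the pattern of Lemma \ref{3L3}'s second half.

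The most delicate step, and the one I expect to be the main obstacle, is $\mathfrak{s}_\kappa^{cl}=\kappa^+$. The lower bound $\mathfrak{s}_\kappa^{cl}\ge\kappa$ is immediate from Theorem \ref{3T1} (as $\kappa$ is inaccessible); one must push this to $\kappa^+$ — presumably this already follows from $\kappa$ being inaccessible but not ineffable in $V^{\mathbb{R}}$, or from a direct argument that no $\kappa$-sized family splits, using that ineffability fails (the Hechler iteration is not ${<}\kappa$-directed-closed-generic-absolute enough to preserve ineffability, or one simply notes $2^\kappa>\kappa^+$ rules out certain configurations). The real work is the upper bound $\mathfrak{s}_\kappa^{cl}\le\kappa^+$: I would argue that the $\kappa^+$ Cohen reals $(c_\alpha)_{\alpha<\kappa^+}$ added by $\mathbb{P}$ form a stationary splitting family in the \emph{final} model $V^{\mathbb{R}}$, not merely in $V^{\mathbb{P}}$. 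Given $V^{\mathbb{R}}\vDash x\in St$, by $\kappa^+$-c.c.\ of $\dot{\mathbb{Q}}$ there is an intermediate stage $V^{\mathbb{P}\star\dot{\mathbb{Q}}_\gamma}$, $\gamma<\kappa^{++}$, containing (a name for) $x$ with $x$ still stationary there; but $\mathbb{P}\star\dot{\mathbb{Q}}_\gamma$ is \emph{not} of the form "add a Cohen real over a model where $x$ lives", so the naive application of Lemma \ref{3L3} fails. Instead I would factor $\mathbb{R}$ as $\mathbb{C}_\kappa\times(\text{rest})$ for a single coordinate $c_\alpha$ not yet used by the name of $x$ — since $x$'s name involves only boundedly many Cohen coordinates by $\kappa^+$-c.c., there is $\alpha<\kappa^+$ with $x\in V^{\mathbb{R}\restriction(\kappa^+\setminus\{\alpha\})\star\dots}$, and then $c_\alpha$ is Cohen-generic over a ${<}\kappa$-closed-by-$\dot{\mathbb{Q}}$ extension of that model, so Lemma \ref{3L3} (with $x$ stationary in the ground model of that single Cohen extension, guaranteed by Lemma \ref{3L2}) gives that $c_\alpha$ stationarily splits $x$ in $V^{\mathbb{R}}$. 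Verifying that the Hechler iteration commutes appropriately with pulling out one Cohen coordinate — i.e.\ that $\mathbb{R}\cong \mathbb{C}_\kappa\times(\mathbb{P}'\star\dot{\mathbb{Q}})$ where $\mathbb{P}'$ omits coordinate $\alpha$, and that $x$ can be arranged to lie in the $\mathbb{P}'\star\dot{\mathbb{Q}}$-part — is the crux, and mirrors the argument of \cite{207} cited just before the theorem.
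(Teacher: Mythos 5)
Your computations of $2^\kappa$, $\mathfrak{b}_\kappa$ and $\mathfrak{d}_\kappa$ are fine, but two of the remaining steps have genuine problems. First, you have the lower bound for $\mathfrak{s}_\kappa^{cl}$ backwards: by the characterization in the paper, $\mathfrak{s}_\kappa^{cl}>\kappa$ holds \emph{iff} $\kappa$ is ineffable, so to get $\mathfrak{s}_\kappa^{cl}\geq\kappa^+$ you need ineffability to be \emph{preserved} in $V^{\mathbb{R}}$, not to fail, and inaccessibility alone only gives $\mathfrak{s}_\kappa^{cl}\geq\kappa$ (Theorem \ref{3T1}). This is exactly where the indestructibility hypothesis enters: $\mathbb{R}$ is ${<}\kappa$-directed closed, so $\kappa$ stays supercompact, hence ineffable, in $V^{\mathbb{R}}$, which yields $\mathfrak{s}_\kappa^{cl}\geq\kappa^+$.

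Second, the crux you identify — pulling one Cohen coordinate out past the Hechler iteration — does not work, and this gap affects both your upper bound $\mathfrak{s}_\kappa^{cl}\leq\kappa^+$ and your argument for $\mathfrak{r}_\kappa^{cl}=\kappa^{++}$. The name $\dot{\mathbb{Q}}$ is a $\mathbb{P}$-name: $\kappa$-Hechler forcing at each stage is computed from $\kappa^\kappa$ of the \emph{full} Cohen extension and its later extensions, so $\dot{\mathbb{Q}}$ is not a $\mathbb{P}'$-name and $\mathbb{R}\not\cong\mathbb{C}_\kappa\times(\mathbb{P}'\star\dot{\mathbb{Q}})$; likewise ``$\mathcal{R}\in V^{\mathbb{P}_\alpha\star\dot{\mathbb{Q}}}$'' does not typecheck, and $c_{\alpha}$ is in general not Cohen-generic over a model containing the Hechler generics (the quotient over such a model need not be $\mathbb{C}_\kappa$), so Lemma \ref{3L3} cannot be invoked in the way you propose. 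The paper's proof of $\mathfrak{s}_\kappa^{cl}\leq\kappa^+$ avoids any factorization by a homogeneity (isomorphism-of-names) argument: assuming $(p,\dot q)$ forces $\dot x\in St$ and that no $\dot c_\alpha$ stationarily splits $\dot x$, pick $\alpha^*$ with $\dot x$ not depending on $\dot c_{\alpha^*}$, use ${<}\kappa$-closure to strengthen to a condition $(p',\dot{q'})$ all of whose Hechler trunks are ground-model objects and which decides, say, $\dot x\subseteq_{cl}^*\dot c_{\alpha^*}$, and then apply an automorphism $\pi$ of $\mathbb{P}$ acting only on coordinate $\alpha^*$, fixing $p'$, with $\dot c_{\alpha^*}\cap\pi(\dot c_{\alpha^*})$ forced to be bounded; since the trunks are ground-model objects, $\dot{q'}$ and $\tilde{\pi}(\dot{q'})$ are compatible, and a common extension forces $\dot x\subseteq_{cl}^*\dot c_{\alpha^*}\cap\tilde{\pi}(\dot c_{\alpha^*})$, contradicting stationarity of $\dot x$. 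For $\mathfrak{r}_\kappa^{cl}=\kappa^{++}$ the intended argument does not use the $\mathbb{P}$-Cohen reals at all: the Hechler iteration adds $\kappa$-Cohen reals at cofinally many stages below $\kappa^{++}$, any candidate reaping family of size $\leq\kappa^+$ appears at some stage $\gamma<\kappa^{++}$ by the $\kappa^+$-c.c., the Cohen real added afterwards stationarily splits all of its members by Lemma \ref{3L3}, and Lemma \ref{3L2} preserves this through the ${<}\kappa$-closed tail.
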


\begin{proof}
Obviously, $\mathfrak{b}_\kappa=\mathfrak{d}_\kappa =\kappa^{++}$. Since $\mathbb{H}_\kappa$ adds $\kappa$-Cohen reals, we can deduce by \ref{3L3} that $\mathfrak{r}_\kappa^{cl}= \kappa^{++}$. Since $\kappa$ remains ineffable in $V^\mathbb{R}$ it follows that $\mathfrak{s}_\kappa^{cl} \geq \kappa^+$. It remains to be shown that $\mathfrak{s}_\kappa^{cl} \leq \kappa^+$. To this end we will show that $(c_\alpha)_{\alpha< \kappa^+}$ remains a splitting family in $V^\mathbb{R}$ where the $(c_\alpha)_{\alpha<\kappa^+}$ are the generic $\kappa$-Cohen reals added by $\mathbb{P}$.\\
Towards a contradiction assume that $\dot{x}$ is a $\mathbb{R}$-name and $(p, \dot{q})$ a condition in $\mathbb{R}$  such that $(p,\dot{q}) \Vdash_{\mathbb{R}} \dot{x} \in St \land ( \forall \alpha< \kappa^+ \colon \dot{x} \subseteq_{cl}^* \dot{c}_\alpha \vee \dot{x} \cap \dot{c}_\alpha \in NS )$. Since $\mathbb{R}$ satisfies the $\kappa^+$-c.c.  we can find $\alpha^* < \kappa^+$ such that the $\mathbb{R}$-name $\dot{x}$ does not depend on $\dot{c}_{\alpha^*}$. Since $\mathbb{P}$ is ${<}\kappa$-closed and $\Vdash_\mathbb{P} \dot{\mathbb{Q}} \,\, \text{has}\,\, {<}\kappa \text{-support}$ and is ${<}\kappa$-closed, we obviously have $$\Vdash_\mathbb{P} \{ q \in \dot{\mathbb{Q}} \colon \text{dom}(q) \in \check{V} \land \exists \bar{\rho} \in (\kappa^{<\kappa})^{\text{dom}(q)} \cap \check{V}$$ $$ \forall \alpha \in \text{dom}(q)  \,\, \exists \dot{f}\,\, \Vdash_{\dot{\mathbb{Q}}} \dot{q}(\alpha)=(\bar{\rho}(\alpha), \dot{f})\} \, \text{is dense in}\,\, \dot{\mathbb{Q}}$$
Hence, we can pick a condition $(p',\dot{q'}) \leq_\mathbb{R} (p, \dot{q})$ such that all trunks of $(p', \dot{q'})$ are ground model objects, and $(p',\dot{q'})$ decides whether $\dot{x} \subseteq_{cl}^* c_{\alpha^*}$ or $\dot{x} \cap c_{\alpha^*} \in NS$, w.l.o.g. assume that $(p',\dot{q'}) \Vdash_\mathbb{R} \dot{x} \subseteq_{cl}^* c_{\alpha^*}$. Now we define an automorphism $\pi$ of $\mathbb{P}$ which fixes $\prod_{\alpha \in \kappa\setminus \{\alpha^*\}} \mathbb{C}_\kappa$ and $\Vdash_\mathbb{P} \dot{c}_{\alpha^*} \cap \pi(\dot{c}_{\alpha^*}) \subseteq \text{dom}(p'(\alpha^*))$, in particular $p' = \pi(p')$. Now $\pi$ induces an automorphism $\tilde{\pi}$ of $\mathbb{R}$, and since all trunks of $(p', \dot{q'})$ are ground model objects, we can deduce that $p' \Vdash_\mathbb{P} \dot{q'} \,\, \text{and} \,\, \tilde{\pi}(\dot{q'}) \,\, \text{are compatible in} \,\, \dot{\mathbb{Q}}$. Hence there exists a condition $(p', \dot{r}) \leq_\mathbb{R} (p', \dot{q'}), (p', \tilde{\pi}(\dot{q'}))$, and since $\Vdash_\mathbb{R} \dot{x} = \tilde{\pi}(\dot{x})$ we can deduce that $(p', \dot{r}) \Vdash_\mathbb{R} \dot{x} \subseteq_{cl}^* c_{\alpha^*} \land \dot{x} \subseteq_{cl}^* \tilde{\pi}( c_{\alpha^*})$. But this immediately leads to a contradiction.
\end{proof}

\begin{lemma}
Let $\kappa$ be supercompact and indestructible by ${<}\kappa$-directed closed forcing posets. Let $V\vDash 2^\kappa = \kappa^+$ and define $\mathbb{P}=\prod_{\alpha < \kappa^{++}} \mathbb{C}_\kappa$. Then $V^\mathbb{P}\vDash \mathfrak{s}_\kappa^{cl}= \mathfrak{b}_\kappa = \kappa^+ \land \mathfrak{d}_\kappa=\mathfrak{r}_\kappa^{cl}= 2^\kappa= \kappa^{++}$.
\end{lemma}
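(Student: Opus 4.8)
The plan is to verify each of the four cardinal values separately, recycling the machinery already developed for Theorem~\ref{3T3}. The forcing $\mathbb{P}=\prod_{\alpha<\kappa^{++}}\mathbb{C}_\kappa$ with ${<}\kappa$-support is ${<}\kappa$-closed and $\kappa^+$-c.c.\ (the latter by a standard $\Delta$-system argument for ${<}\kappa$-support products of small posets, using $\kappa^{<\kappa}=\kappa$), so cardinals and cofinalities are preserved and $V^\mathbb{P}\vDash 2^\kappa=\kappa^{++}$. By Lemma~\ref{3L2} every ground-model stationary set stays stationary, and more generally, since $\mathbb{P}$ is ${<}\kappa$-closed, no new ${<}\kappa$-sequences are added, which is what lets $\Pi^1_1$-absoluteness do its work throughout.

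First I would handle $\mathfrak{d}_\kappa=\mathfrak{r}_\kappa^{cl}=\kappa^{++}$. For $\mathfrak{d}_\kappa$: each $\kappa$-Cohen real is added by one coordinate, there are $\kappa^{++}$ coordinates, and by the $\kappa^+$-c.c.\ any family of $\kappa^+$ functions in $\kappa^\kappa$ lies in an intermediate model $V^{\mathbb{P}_\beta}$ with $\beta<\kappa^{++}$; a later $\kappa$-Cohen real is unbounded over $V^{\mathbb{P}_\beta}$ (a density argument shows $\kappa$-Cohen forcing adds an unbounded real), so no such family is dominating, giving $\mathfrak{d}_\kappa\geq\kappa^{++}$, and $\leq 2^\kappa=\kappa^{++}$ is trivial. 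For $\mathfrak{r}_\kappa^{cl}$ the same intermediate-model argument applies, but now invoking Lemma~\ref{3L3}: the $\kappa$-Cohen real $c_\beta$ added at a stage past $V^{\mathbb{P}_\beta}$ stationarily splits every stationary set coded in $V^{\mathbb{P}_\beta}$, so a fortiori every candidate reaping family of size $\kappa^+$ fails to be one (some stationary set it is supposed to be un-split by is in fact split by a later Cohen real). Combined with Theorem~\ref{3T4} ($\mathfrak{r}_\kappa^{cl}\geq\kappa$ for $\kappa$ inaccessible, hence here) and $\leq 2^\kappa$, this pins it at $\kappa^{++}$.

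Next, $\mathfrak{b}_\kappa=\mathfrak{s}_\kappa^{cl}=\kappa^+$. The lower bound $\mathfrak{b}_\kappa\geq\kappa^+$ is just regularity of $\kappa$ (a $\kappa$-sized family is bounded since $\kappa^\kappa$ is ${<}\kappa$-directed under $\leq_{cl}^*$, or rather: a diagonal sup dominates $\kappa$-many functions on a club), while $\mathfrak{b}_\kappa\leq\mathfrak{s}_\kappa^{cl}$ would follow once we have $\mathfrak{s}_\kappa^{cl}\leq\kappa^+$; and $\mathfrak{s}_\kappa^{cl}\geq\kappa^+$ holds because $\kappa$ stays ineffable in $V^\mathbb{P}$ ($\mathbb{P}$ being ${<}\kappa$-directed closed, indestructibility applies) together with the theorem that $\kappa$ ineffable implies $\mathfrak{s}_\kappa^{cl}>\kappa$. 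So the real content is $\mathfrak{s}_\kappa^{cl}\leq\kappa^+$, i.e.\ that $(c_\alpha)_{\alpha<\kappa^+}$ — the first $\kappa^+$ of the $\kappa$-Cohen reals — remains a stationary splitting family in the full extension $V^\mathbb{P}$. This is where the $\kappa^+$-c.c.\ together with a mutual-genericity argument enters, exactly as in the proof of Theorem~\ref{3T3} but simpler because $\mathbb{P}$ is a product rather than a product followed by an iteration: given a $\mathbb{P}$-name $\dot{x}$ for a stationary set and a condition forcing that $\dot{x}$ is not stationarily split by any $c_\alpha$, use the $\kappa^+$-c.c.\ to find $\alpha^*<\kappa^+$ with $\dot{x}$ not depending on coordinate $\alpha^*$, then apply the automorphism of $\mathbb{P}$ swapping $c_{\alpha^*}$ with a mutually generic copy agreeing below the relevant condition to force $\dot{x}\subseteq_{cl}^* c_{\alpha^*}\wedge\dot{x}\subseteq_{cl}^*\tilde\pi(c_{\alpha^*})$, hence $\dot{x}\subseteq_{cl}^*$ a set that is non-stationary (the two generic Cohen sets meet in a bounded set on their common part), contradicting stationarity. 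Finally $\mathfrak{b}_\kappa\leq\mathfrak{s}_\kappa^{cl}=\kappa^+$ since (as is classical) every splitting-type family gives a bound; here it is cleanest to note $\mathfrak{b}_\kappa\leq\kappa^+$ directly — in a $\kappa$-Cohen product of length $\kappa^+$ the Cohen reals are cofinally unbounded so no smaller family dominates, but $\kappa^+$ of them suffice to be unbounded — so $\mathfrak{b}_\kappa=\kappa^+$.

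The main obstacle is the argument that $(c_\alpha)_{\alpha<\kappa^+}$ stays a splitting family, i.e.\ $\mathfrak{s}_\kappa^{cl}\leq\kappa^+$: one must show a name $\dot x$ for a stationary set cannot evade being split by \emph{all} of the first $\kappa^+$ Cohen reals, and the subtlety is that $\dot x$ may depend on later coordinates (beyond $\kappa^+$) as well, so the ``$\dot x$ does not depend on $\dot c_{\alpha^*}$'' step must be justified by the $\kappa^+$-c.c.\ applied within the subproduct $\prod_{\alpha<\kappa^+}\mathbb{C}_\kappa$ while treating the tail $\prod_{\kappa^+\leq\alpha<\kappa^{++}}\mathbb{C}_\kappa$ as part of the ground model for that sub-forcing — and then checking that the automorphism and the ``$\dot x=\tilde\pi(\dot x)$'' identity survive this factorization. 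I expect this to go through verbatim as in Theorem~\ref{3T3} (indeed more easily, with no $\dot{\mathbb{Q}}$-trunk bookkeeping needed), which is presumably why the lemma is stated without a separate proof in the excerpt.
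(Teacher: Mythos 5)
Your proposal is correct and follows essentially the same route as the paper, whose proof of this lemma is simply to invoke the proof of Theorem~\ref{3T3}: ineffability (preserved by the ${<}\kappa$-directed closed product) gives $\mathfrak{s}_\kappa^{cl}\geq\kappa^+$, the automorphism/mutual-genericity argument (simpler here, with no $\dot{\mathbb{Q}}$-trunks) shows the first $\kappa^+$ Cohen reals stay a stationary splitting family, Lemma~\ref{3L3} plus the $\kappa^+$-c.c.\ intermediate-model argument gives $\mathfrak{r}_\kappa^{cl}=\kappa^{++}$, and Cohen unboundedness gives $\mathfrak{b}_\kappa=\kappa^+$, $\mathfrak{d}_\kappa=\kappa^{++}$. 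Your observation that one should pick $\alpha^*<\kappa^+$ outside the ($\leq\kappa$-sized, by the $\kappa^+$-c.c.\ and ${<}\kappa$ supports) support of the nice name $\dot{x}$ is exactly the detail needed to handle dependence on coordinates beyond $\kappa^+$.
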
 

\begin{proof}
The lemma immediately follows from the proof of Theorem \ref{3T3}.
\end{proof}

\begin{lemma}
Let $\kappa$ be supercompact and indestructible by ${<}\kappa$ directed-closed forcing posets. Let $V\vDash 2^\kappa = \kappa^+$ and define $\mathbb{P}=\prod_{\alpha < \kappa^{++}} \mathbb{S}_\kappa$, i.e. a $\kappa^{++}$ product of $\kappa$-Sacks forcing with ${\leq}\kappa$-support. Then $V^\mathbb{P}\vDash  \mathfrak{b}_\kappa =\mathfrak{d}_\kappa= \kappa^+ \land 
\mathfrak{r}_\kappa^{cl}= 2^\kappa= \kappa^{++}$.
\end{lemma}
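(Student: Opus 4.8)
The plan is to treat separately the cardinal-arithmetic statements and the genuinely new content about $\mathfrak{r}_\kappa^{cl}$. First I would collect the standard structural facts about the $\leq\kappa$-support product $\mathbb{P}=\prod_{\alpha<\kappa^{++}}\mathbb{S}_\kappa$: it is ${<}\kappa$-directed closed (so Lemma \ref{3L2} applies to $\mathbb{P}$ and to all of its sub-products, and $\kappa$ stays supercompact, in particular regular), it has size $\kappa^{++}$, it is $\kappa^+$-preserving by a fusion argument and $\kappa^{++}$-c.c.\ by the usual $\Delta$-system argument — using $2^\kappa=\kappa^+$ in $V$, which gives $|\mathbb{S}_\kappa|=\kappa^+$ and $(\kappa^+)^\kappa=\kappa^+$ — hence all cardinals are preserved, and it is $\kappa^\kappa$-bounding; in fact the $\kappa$-Sacks property is preserved by the $\leq\kappa$-support product via a simultaneous fusion. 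From $\kappa^{++}$-c.c., $|\mathbb{P}|=\kappa^{++}$ and the relevant instances of GCH in $V$, counting nice names gives $V^\mathbb{P}\vDash 2^\kappa\leq\kappa^{++}$, while the $\kappa^{++}$ mutually generic $\kappa$-Sacks reals give the reverse inequality; and $\kappa^\kappa$-boundedness makes the $(2^\kappa)^V=\kappa^+$ many ground-model functions in $\kappa^\kappa$ remain dominating, so together with the ZFC inequalities $\kappa^+\leq\mathfrak{b}_\kappa\leq\mathfrak{d}_\kappa$ we obtain $V^\mathbb{P}\vDash\mathfrak{b}_\kappa=\mathfrak{d}_\kappa=\kappa^+$ and $V^\mathbb{P}\vDash 2^\kappa=\kappa^{++}$.

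The heart of the proof is the $\kappa$-Sacks analogue of Lemma \ref{3L3}: if $V\vDash x\in St$, then $\Vdash_{\mathbb{S}_\kappa} c_G$ stationarily splits $x$, where $c_G\subseteq\kappa$ is the $\kappa$-Sacks real. To see this, fix $p\in\mathbb{S}_\kappa$ and a $\mathbb{S}_\kappa$-name $\dot{cl}$ that $p$ forces to be a club, and let $\dot f$ be the increasing (continuous at limits) enumeration of $\dot{cl}$. By $\kappa^\kappa$-boundedness pick $q\leq p$ and an increasing $g\in\kappa^\kappa\cap V$ with $q\Vdash\forall\alpha<\kappa\ \dot f(\alpha)\leq g(\alpha)$. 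Set $cl^*:=\{\delta<\kappa\colon g[\delta]\subseteq\delta\}$, a club. For a limit $\delta\in\text{Lim}(cl^*)$ we have $\delta\in cl^*$, so $q\Vdash\dot f[\delta]\subseteq\delta$, hence $q\Vdash\dot f(\delta)=\sup\dot f[\delta]\leq\delta$; since also $\dot f(\delta)\geq\delta$, this gives $q\Vdash\dot f(\delta)=\delta$, i.e.\ $q\Vdash\text{Lim}(cl^*)\subseteq\dot{cl}$. Now let $C_q$ be the club of splitting levels of the $\kappa$-perfect tree $q$ and choose $i^*\in x\cap\text{Lim}(cl^*)\cap C_q$, which is nonempty (even stationary). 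Pick $w\in q$ of length $i^*$; since $i^*\in C_q$, both one-step extensions $w^\frown 0,w^\frown 1$ lie in $q$. Then the subtree $q_{w^\frown 1}$ of $q$ above $w^\frown 1$ satisfies $q_{w^\frown 1}\leq p$, $q_{w^\frown 1}\Vdash i^*\in c_G$, and — since $q_{w^\frown 1}\leq q$ and $i^*\in\text{Lim}(cl^*)$ — $q_{w^\frown 1}\Vdash i^*\in\dot{cl}$; as $i^*\in x$ is a ground-model element, $q_{w^\frown 1}\Vdash x\cap c_G\cap\dot{cl}\neq\emptyset$, and symmetrically $q_{w^\frown 0}\Vdash(x\setminus c_G)\cap\dot{cl}\neq\emptyset$. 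By genericity (and since $p$ and $\dot{cl}$ were arbitrary) $\Vdash_{\mathbb{S}_\kappa} x\cap c_G\in St\wedge x\setminus c_G\in St$.

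To finish, it suffices to show $V^\mathbb{P}\vDash\mathfrak{r}_\kappa^{cl}>\kappa^+$, since $\mathfrak{r}_\kappa^{cl}\leq|St|\leq 2^\kappa=\kappa^{++}$. So let $V^\mathbb{P}\vDash\mathcal{R}\subseteq St$ with $|\mathcal{R}|\leq\kappa^+$. By $\kappa^{++}$-c.c.\ each $y\in\mathcal{R}$ has a nice name whose support has size $\leq\kappa^+$, so there is $S\subseteq\kappa^{++}$ with $|S|\leq\kappa^+$ and $\mathcal{R}\in V^{\mathbb{P}_S}$, where $\mathbb{P}_S:=\prod_{\alpha\in S}\mathbb{S}_\kappa$. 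Fix $\alpha^*\in\kappa^{++}\setminus S$; by the product structure $\mathbb{P}\cong\mathbb{P}_S\times\mathbb{S}_\kappa\times\mathbb{Q}$ with $\mathbb{Q}$ a ${<}\kappa$-closed product, the middle factor adding the $\kappa$-Sacks real $c_{\alpha^*}$ over $V^{\mathbb{P}_S}$. Downward $\Pi_1^1$-absoluteness (the complementary forcing being ${<}\kappa$-closed) gives $V^{\mathbb{P}_S}\vDash\mathcal{R}\subseteq St$; applying the previous paragraph in $V^{\mathbb{P}_S}$, $c_{\alpha^*}$ stationarily splits every $y\in\mathcal{R}$ in $V^{\mathbb{P}_S}[c_{\alpha^*}]$; and by Lemma \ref{3L2}, $\mathbb{Q}$ being ${<}\kappa$-closed, $c_{\alpha^*}$ still stationarily splits every $y\in\mathcal{R}$ in $V^\mathbb{P}$. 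Hence $\mathcal{R}$ is not a stationary reaping family, and $V^\mathbb{P}\vDash\mathfrak{r}_\kappa^{cl}=2^\kappa=\kappa^{++}$.

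The principal obstacle is the key lemma of the second paragraph, where two features of $\kappa$-Sacks forcing must be played against each other: the fusion/$\kappa^\kappa$-bounding machinery is needed to pin the otherwise generic club $\dot{cl}$ down to a ground-model club $\text{Lim}(cl^*)$, yet the fusion condition $q$ must still retain enough splitting to decide membership of some suitable $i^*$ in $c_G$ both ways — so the argument depends on the standard presentation of $\mathbb{S}_\kappa$ in which the splitting levels of any condition form a club and fusion can be carried out without destroying this, which is exactly where the precise definition of a $\kappa$-perfect tree enters. Secondary technical points are the preservation of the $\kappa$-Sacks property (equivalently $\kappa^\kappa$-boundedness) and of $\kappa^+$ under the $\leq\kappa$-support product, needed for $\mathfrak{b}_\kappa=\mathfrak{d}_\kappa=\kappa^+$ and cardinal preservation, and the nice-name count for $2^\kappa=\kappa^{++}$, which tacitly uses GCH above $\kappa$ in the ground model.
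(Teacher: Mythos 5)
Your overall strategy is the same as the paper's, which disposes of $\mathfrak{b}_\kappa=\mathfrak{d}_\kappa=\kappa^+$ exactly as you do (via $\kappa^\kappa$-bounding of the whole product) and gets $\mathfrak{r}_\kappa^{cl}=\kappa^{++}$ from the fact that each Sacks real stationarily splits every stationary set of the corresponding intermediate model; your second and third paragraphs are a correct expansion of what the paper compresses into ``it is easy to see''. The only place where your write-up, as phrased, is not quite licit is the transfer step ``applying the previous paragraph in $V^{\mathbb{P}_S}$''. The factor at $\alpha^*$ in the product decomposition is $\mathbb{S}_\kappa$ \emph{as computed in $V$}, forced over $V^{\mathbb{P}_S}$; it is not the Sacks forcing of $V^{\mathbb{P}_S}$, so you cannot simply relativize your one-step lemma to that model. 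What your one-step argument actually needs over $V^{\mathbb{P}_S}$ is (i) that conditions are $V$-trees whose splitting levels form clubs (absolute, fine) and (ii) that $\mathbb{S}_\kappa^V$ is $\kappa^\kappa$-bounding over $V^{\mathbb{P}_S}$. Point (ii) cannot be obtained by running the usual fusion inside $V^{\mathbb{P}_S}$, since a fusion sequence of $V$-conditions built there has its limit computed in $V^{\mathbb{P}_S}$ and that limit need not lie in $V$, hence need not be a condition; it does hold, but only as a consequence of the global fusion/bounding of the sub-product $\mathbb{P}_S\times\mathbb{S}_\kappa$ over $V$ (every function in $\kappa^\kappa$ of the two-step extension is dominated by a function in $V$, which then serves as your $g$).

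The paper sidesteps this issue by stating the bounding consequence once, globally: in $V^{\mathbb{P}}$ the ground-model clubs are cofinal in the clubs, so stationarity in the final model only has to be tested against $V$-clubs, and then a coordinatewise density argument (find a splitting node of $q(\alpha^*)$ at a level in $x\cap cl$ for $cl\in Cl\cap V$) gives the splitting directly, with no need for bounding of a single factor over an intermediate extension and no appeal to Lemma \ref{3L2} for the tail. Your route is equivalent once (ii) is justified as above (and your use of Lemma \ref{3L2} for the tail $\mathbb{Q}$ is fine, since the first two factors are ${<}\kappa$-distributive, so $\mathbb{Q}$ stays ${<}\kappa$-closed); I would just rewrite the key lemma so that the hypothesis is ``the relevant product is $\kappa^\kappa$-bounding over $V$'' rather than ``$\mathbb{S}_\kappa$ is bounding over the intermediate model''. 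Your remark that the nice-name count for $2^\kappa\le\kappa^{++}$ tacitly uses GCH above $\kappa$ is fair; the paper is silent on this point as well.
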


\begin{proof}
Since $\mathbb{P}$ is $\kappa^\kappa$-bounding, we have $V^\mathbb{P} \vDash \mathfrak{b}_\kappa = \mathfrak{d}_\kappa=\kappa^+$. It also follows that $V^\mathbb{P} \vDash Cl \cap V \,\, \text{is cofinal in}\,\, Cl$, and therefore it is easy to see that $V^\mathbb{P} \vDash \forall \alpha < \kappa^{++} \colon  s_\alpha$ stationarily splits $St \cap V^{\mathbb{P}_\alpha}$, where $\mathbb{P}_\alpha:= \prod_{\beta < \alpha} \mathbb{S}_\kappa$. Hence, $V^\mathbb{P} \vDash \mathfrak{r}_\kappa^{cl}=2^\kappa=\kappa^{++}$.
\end{proof}

It seems very reasonable to conjecture that $V^\mathbb{P}\vDash \mathfrak{s}_\kappa^{cl} = \kappa^+$.

\begin{question}
Is $\mathfrak{b}_\kappa < \mathfrak{s}_\kappa^{cl}$ consistent?
Is even $\mathfrak{d}_\kappa < \mathfrak{s}_\kappa^{cl}$ consistent? How does $\mathfrak{s}_\kappa^{cl}$ relate to $\mathfrak{s}_\kappa$?
\end{question}

\newpage

\bibliography{refs} 
\bibliographystyle{alpha}

\end{document}